\documentclass[12pt,bezier]{article}
\usepackage{times}
\usepackage{booktabs}
\usepackage{pifont}
\usepackage{floatrow}
\floatsetup[table]{capposition=top}
\usepackage{caption}
\usepackage{mathrsfs}
\usepackage[fleqn]{amsmath}
\usepackage{amsfonts,amsthm,amssymb,mathrsfs,bbding}
\usepackage{txfonts}
\usepackage{graphics,multicol}
\usepackage{graphicx}
\usepackage{color}
\usepackage{enumerate}
\usepackage{caption}
\captionsetup{%
  figurename=Fig.,
  tablename=Tab.
}
\usepackage{cite}
\usepackage{latexsym,bm}
\usepackage{indentfirst}
\usepackage{mathtools}
\pagestyle{myheadings} \markright{} \textwidth 150mm \textheight 235mm \oddsidemargin=1cm
\evensidemargin=\oddsidemargin\topmargin=-1.5cm

\newtheorem{thm}{Theorem}[section]

\newtheorem{lem}{Lemma}[section]
\newtheorem{cor}{Corollary}[section]

\newtheorem{remark}{Remark}

\theoremstyle{definition}

\addtocounter{section}{0}

\begin{document}
\title{On regular graphs with four distinct eigenvalues\footnote{This work is supported
by NSFC (Grant Nos. 11671344, 11261059 and 11531011).}}
\author{{\small Xueyi Huang, \ \ Qiongxiang Huang\footnote{
Corresponding author.}\setcounter{footnote}{-1}\footnote{
\emph{E-mail address:} huangxymath@gmail.com (X. Huang), huangqx@xju.edu.cn (Q. Huang).} }\\[2mm]\scriptsize
College of Mathematics and Systems Science,
\scriptsize Xinjiang University, Urumqi, Xinjiang 830046, P. R. China}

\date{}
\maketitle
{\flushleft\large\bf Abstract}
Let $\mathcal{G}(4,2)$ be the set of connected regular graphs with four distinct eigenvalues in which exactly two eigenvalues are simple,  $\mathcal{G}(4,2,-1)$ (resp. $\mathcal{G}(4,2,0)$) the set of graphs belonging to $\mathcal{G}(4,2)$ with $-1$ (resp. $0$) as an eigenvalue, and $\mathcal{G}(4,\geq -1)$ the set of connected regular graphs with four distinct eigenvalues and  second least eigenvalue  not less than $-1$. In this paper, we prove the non-existence of connected graphs having four distinct eigenvalues in which at least three eigenvalues are simple, and determine all the graphs in  $\mathcal{G}(4,2,-1)$.  As a by-product of this work,  we   characterize all the graphs belonging to $\mathcal{G}(4,\geq-1)$ and $\mathcal{G}(4,2,0)$, respectively, and   show that all these graphs  are determined by their spectra.

\vspace{0.1cm}
\begin{flushleft}
\textbf{Keywords:} Regular graphs;  eigenvalues; DS
\end{flushleft}
\textbf{AMS Classification:} 05C50

\section{Introduction}\label{s-1}
Let $G=(V(G), E(G))$ be a simple  undirected graph  on $n$ vertices with adjacency matrix $A=A(G)$. Denote by $\lambda_1,\lambda_2,\ldots,\lambda_t$  all the distinct eigenvalues of $A$ with multiplicities $m_1,m_2,\ldots,m_t$ ($\sum_{i=1}^tm_i=n$), respectively. These eigenvalues are also called the \textit{eigenvalues} of $G$. All the eigenvalues together with their multiplicities are called the \emph{spectrum} of $G$ denoted by $\mathrm{Spec}(G)=\big\{[\lambda_1]^{m_1},[\lambda_2]^{m_2},\ldots,[\lambda_t]^{m_t}\big\}$.  If $G$ is a connected $k$-regular graph, then $\lambda_1$ denotes $k$, and has multiplicity $m_1=1$.

A graph $G$ is said to be \emph{determined by its  spectrum} (DS for short) if  $G\cong H$ whenever $\mathrm{Spec}_A(G)=\mathrm{Spec}_A(H)$ for any graph $H$. Also, a graph $G$ is called \emph{walk-regular} if for which the number of walks of length $r$ from a given vertex $x$ to itself (closed walks) is independent of the choice of $x$ for all $r$ (see \cite{Godsil}). Note that a walk-regular graph is always regular, but in general the converse is not true.

Throughout this paper,  we denote  the \emph{neighbourhood} of a vertex $v\in V(G)$ by $N_{G}(v)$,  the complete graph on  $n$ vertices by $K_n$,  the complete multipartite graph with $s$ parts of sizes $n_1,\ldots,n_s$  by $K_{n_1,\ldots,n_s}$, and the graph obtained by removing a perfect matching from $K_{n,n}$ by $K_{n,n}^-$. Also,  the  $n\times n$ identity matrix,  the $n\times 1$ all-ones  vector and  the $n\times n$ all-ones matrix will
be denoted by  $I_n$, $\mathbf{e}_n$  and $J_n$, respectively.

Connected graphs with a few  eigenvalues have aroused  a lot of  interest in the past several decades.  This problem was perhaps first raised by Doob \cite{Doob}. It is well known that connected regular graphs having three distinct eigenvalues are  strongly regular graphs \cite{Shrikhande}, and connected regular bipartite graphs having four distinct eigenvalues are the incidence graphs of  symmetric balanced incomplete block designs \cite{Brouwer,Cvetkovic}. Furthermore, connected non-regular graphs with three distinct eigenvalues and least eigenvalue $-2$ were determined by Van Dam \cite{Dam3}.  Very recently, Cioab\u{a} et al.  in  \cite{Cioaba1} (resp. \cite{Cioaba})  determined all graphs with at most two eigenvalues (multiplicities included) not equal to $\pm1$ (resp. $-2$ or $0$). De Lima et al. in \cite{Lima} determined all connected non-bipartite graphs with all but two eigenvalues in the interval $[-1, 1]$. For more results on graphs with few eigenvalues, we refer the reader to \cite{Bridges,Caen,Cheng,Dam1,Dam2,Dam4,Dam5,Dam7,Doob,Muzychuk,Rowlinson}.

Van Dam in \cite{Dam1,Dam2} investigated the connected regular graphs with four distinct eigenvalues. He classified such graphs into three classes according to the number of integral eigenvalues  (see Lemma \ref{lem-1} below). Based on Van Dam's classification and the number of simple eigenvalues, we can classify such graphs more precisely, that is, if $G$ is a connected $k$-regular graphs with four distinct eigenvalues, then
\begin{enumerate}[(1)]
\vspace{-0.2cm}
\item $G$ has at least three  simple eigenvalues, or
\vspace{-0.25cm}
\item $G$ has two simple eigenvalues:
\vspace{-0.25cm}
\begin{enumerate}[(2a)]
\item $G$ has four integral eigenvalues in which two eigenvalues are simple;
\vspace{-0.2cm}
\item $G$ has two integral eigenvalues, which are simple, and two eigenvalues of the form $\frac{1}{2}(a\pm\sqrt{b})$, with $a,b\in \mathbb{Z}$, $b>0$, with the same multiplicity, or
\end{enumerate}
\vspace{-0.35cm}
\item $G$ has one simple eigenvalue, i.e., its degree $k$:
\vspace{-0.25cm}
\begin{enumerate}[(3a)]
\item $G$ has four integral eigenvalues;
\vspace{-0.2cm}
\item $G$ has two integral eigenvalues, and two eigenvalues of the form $\frac{1}{2}(a\pm\sqrt{b})$, with $a,b\in \mathbb{Z}$, $b>0$, with the same multiplicity;
    \vspace{-0.2cm}
\item $G$ has one integral eigenvalue, its degree $k$, and the other three have the same multiplicity $m=\frac{1}{3}(n-1)$, and $k=m$ or $k=2m$.
\end{enumerate}
   \vspace{-0.3cm}
\end{enumerate}

In this paper, we continue to focus on connected regular  graphs with four distinct eigenvalues. Concretely, we  show that there are no graphs in (1), and   give a complete characterization of the graphs belonging to $\mathcal{G}(4,2,-1)$: if $-1$ is a non-simple eigenvalue, we determine all such graphs; if $-1$ is a simple eigenvalue,  we prove that such graphs cannot belong to (2a) and (2b), respectively,  and so do not exist. In the process, we determine all the graphs  in $\mathcal{G}(4,\geq -1)$ and $\mathcal{G}(4,2,0)$, respectively, and show that all these graphs  are DS.

\section{Main tools}\label{s-2}
In this section, we recall some results from the literature  that will be useful in the next section.

\begin{lem}\label{lem-1} (See \cite{Dam1,Dam2}.)
If $G$ is a connected $k$-regular graph on $n$ vertices
with four distinct eigenvalues, then
\begin{enumerate}[(i)]
\vspace{-0.3cm}
\item $G$ has four integral eigenvalues, or
\vspace{-0.3cm}
\item $G$ has two integral eigenvalues, and two eigenvalues of the form $\frac{1}{2}(a\pm\sqrt{b})$, with $a,b\in \mathbb{Z}$, $b>0$, with the same multiplicity, or
\vspace{-0.3cm}
\item $G$ has one integral eigenvalue, its degree $k$, and the other three have the same multiplicity $m=\frac{1}{3}(n-1)$, and $k=m$ or $k=2m$.
\end{enumerate}
\end{lem}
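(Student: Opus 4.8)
The plan is to exploit the fact that the eigenvalues of $G$ are roots of the characteristic polynomial of $A$, which is monic with integer coefficients, so they are algebraic integers, together with the fact that Galois-conjugate eigenvalues have equal multiplicity. First I would note that, since $G$ is connected and $k$-regular, Perron--Frobenius gives $\lambda_1=k\in\mathbb{Z}$ and $m_1=1$, so $n=1+m_2+m_3+m_4$. Then, writing the characteristic polynomial of $A$ as a product of powers of distinct monic irreducible polynomials over $\mathbb{Q}$ (each lying in $\mathbb{Z}[x]$ by Gauss's lemma), every irreducible factor contributes all of its roots with one common exponent; hence any two eigenvalues conjugate over $\mathbb{Q}$ occur with the same multiplicity. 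In particular the three non-principal eigenvalues $\lambda_2,\lambda_3,\lambda_4$ split into Galois orbits, and this partition has exactly one of three shapes.

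These three shapes give the three cases. If $\{\lambda_2\},\{\lambda_3\},\{\lambda_4\}$ are all singletons, each $\lambda_i$ is a rational algebraic integer, hence an integer, giving (i). If there are one singleton and one orbit of size two, then one $\lambda_i$ is an integer while the other two form a conjugate pair of quadratic algebraic integers of equal multiplicity; writing their minimal polynomial as $x^2+px+q$ with $p,q\in\mathbb{Z}$, they equal $\frac12(a\pm\sqrt b)$ with $a=-p$ and $b=p^2-4q\in\mathbb{Z}$, and $b>0$ because $\lambda_3\ne\lambda_4$ forces the roots to be real and distinct; this is (ii). If $\{\lambda_2,\lambda_3,\lambda_4\}$ is a single orbit, the three eigenvalues are mutually conjugate, so $m_2=m_3=m_4=:m$; then $n=1+3m$, i.e. $m=\frac13(n-1)$, and it remains to establish $k=m$ or $k=2m$.

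For this last point I would use trace identities. Since $\lambda_2,\lambda_3,\lambda_4$ are conjugate, their elementary symmetric functions $s_1=\lambda_2+\lambda_3+\lambda_4$, $s_2$, $s_3$ are rational algebraic integers, hence integers. From $\mathrm{tr}(A)=0$ we get $k+ms_1=0$, so $s_1=-c$ where $c:=k/m$ is a positive integer. From $\mathrm{tr}(A^2)=2|E(G)|=nk$, substituting $k=mc$ and $n=1+3m$, one gets $\sum_{i=2}^{4}\lambda_i^{2}=c\big(1+m(3-c)\big)$; the left-hand side is strictly positive (the $\lambda_i$ are distinct, so not all zero), whence $1+m(3-c)>0$ and therefore $c\le 3$. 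Finally $c=3$ would force $k=3m=n-1$, so $G=K_n$, contradicting the assumption of four distinct eigenvalues; thus $c\in\{1,2\}$, i.e. $k=m$ or $k=2m$. The step I expect to be the main obstacle is precisely this derivation of $k\in\{m,2m\}$ in case (iii); the remainder is routine algebraic-number-theory bookkeeping, the two points requiring the most care being the equal-multiplicity property of conjugate eigenvalues and the normal form $\frac12(a\pm\sqrt b)$ for a real quadratic algebraic integer.
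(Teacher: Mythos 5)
The paper does not prove this lemma at all—it is quoted from van Dam's papers \cite{Dam1,Dam2}—so there is nothing internal to compare against; measured against the original source, your argument (algebraic integrality of eigenvalues, equal multiplicities within a Galois orbit of the characteristic polynomial, and the trace identities $\mathrm{tr}(A)=0$, $\mathrm{tr}(A^2)=nk$ to force $m\mid k$ and $k/m\in\{1,2\}$ after excluding $k=n-1$) is essentially the standard proof. Your write-up is correct and complete, including the two delicate points you flag: conjugate eigenvalues sharing a multiplicity, and the derivation of $k=m$ or $k=2m$ in the single-orbit case.
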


\begin{lem}\label{lem-5}(See \cite{Brouwer}.)
If $G$ is connected and regular with four distinct eigenvalues,
then $G$ is walk-regular.
\end{lem}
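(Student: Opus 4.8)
The plan is to verify the definition of walk-regularity directly. Recall that $G$ is walk-regular precisely when, for every $r\ge 0$, all diagonal entries of $A^r$ coincide (the $(x,x)$-entry of $A^r$ being the number of closed walks of length $r$ based at $x$). Since $G$ has exactly four distinct eigenvalues, the minimal polynomial of $A$ has degree $4$, so $\mathbb{R}[A]$ is spanned by $I_n,A,A^2,A^3$; in particular every power $A^r$ is a real linear combination of these four matrices. Hence it suffices to show that each of $I_n,A,A^2,A^3$ has constant diagonal, and the claim for all $r$ follows by linearity.

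For the first three this is immediate: $(I_n)_{xx}=1$; $(A)_{xx}=0$ because $G$ is simple; and $(A^2)_{xx}=\deg(x)=k$ because $G$ is $k$-regular. The only point that needs an argument is $A^3$. Let $k=\lambda_1>\lambda_2>\lambda_3>\lambda_4$ be the eigenvalues and put $q(x)=(x-\lambda_2)(x-\lambda_3)(x-\lambda_4)$. Since $G$ is connected and $k$-regular, the all-ones vector $\mathbf{e}_n$ spans the $k$-eigenspace, while $q$ annihilates the eigenspaces of $\lambda_2,\lambda_3,\lambda_4$; therefore $q(A)=q(k)E_1$, where $E_1=\frac1nJ_n$ is the orthogonal projection onto $\langle\mathbf{e}_n\rangle$, i.e. $q(A)=\frac{q(k)}{n}J_n$. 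Expanding
\[
q(A)=A^3-(\lambda_2+\lambda_3+\lambda_4)A^2+(\lambda_2\lambda_3+\lambda_2\lambda_4+\lambda_3\lambda_4)A-\lambda_2\lambda_3\lambda_4 I_n
\]
and comparing $(x,x)$-entries, using the diagonals of $I_n,A,A^2$ already determined, gives
\[
(A^3)_{xx}=\frac{q(k)}{n}+(\lambda_2+\lambda_3+\lambda_4)k+\lambda_2\lambda_3\lambda_4,
\]
which is independent of $x$.

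Combining the four cases, every $A^r$ has constant diagonal, so $G$ is walk-regular. I do not expect a genuine obstacle here: the argument is short and self-contained, and the only mildly non-obvious move is recognising that $q(A)$ collapses to a scalar multiple of $J_n$ — precisely the step where connectedness and regularity (the largest eigenvalue simple with eigenvector $\mathbf{e}_n$) enter. An alternative formulation works through the spectral idempotents $E_1,\dots,E_4$: $E_1$ has constant diagonal, and the identities $\sum_i E_i=I_n$, $\sum_i\lambda_iE_i=A$, $\sum_i\lambda_i^2E_i=A^2$ pin down all diagonal information in terms of $k$; but the polynomial computation above is the most economical route.
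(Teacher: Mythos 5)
Your proof is correct. The paper does not prove this lemma itself --- it only cites Brouwer and Haemers --- and your argument (constant diagonals of $I_n$, $A$, $A^2$ from simplicity and $k$-regularity, the identity $q(A)=\frac{q(k)}{n}J_n$ with $q(x)=(x-\lambda_2)(x-\lambda_3)(x-\lambda_4)$ to handle $A^3$, and reduction of every higher power $A^r$ to the span of $I_n,A,A^2,A^3$ via the degree-four minimal polynomial) is essentially the standard proof found in that reference, so there is nothing to fix.
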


Let $G$ be a $k$-regular graph. We say that $G$ admits a \emph{regular partition into halves} with degrees $(a,b)$ ($a+b=k$) if we can partition the vertices of $G$ into two parts of equal size such that every vertex has $a$ neighbors in its own part and $b$ neighbors in the other part \cite{Dam1}.
\begin{lem}\label{lem-2}(See \cite{Dam1}.)
Let $G$ be a connected walk-regular graph on $n$ vertices
and degree $k$, having distinct eigenvalues $k,\lambda_2,\lambda_3,\ldots,\lambda_t$,
of which an eigenvalue unequal to $k$, say $\lambda_j$, has multiplicity $1$.
Then $n$ is even and $G$ admits a regular partition into halves with degrees
$(\frac{1}{2}(k+\lambda_j),\frac{1}{2}(k-\lambda_j))$. Moreover, $n$ is a divisor of
$$\prod_{i\neq j}(k-\lambda_i)+\prod_{i\neq j}(\lambda_j-\lambda_i)~~and~~\prod_{i\neq j}(k-\lambda_i)-\prod_{i\neq j}(\lambda_j-\lambda_i).$$
\end{lem}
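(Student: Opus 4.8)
The plan is to use the multiplicity-one hypothesis together with walk-regularity to force the $\lambda_j$-eigenspace to be spanned by a $\{\pm1\}$-vector, and then to read off everything from that vector. First I would recall that a connected regular graph is walk-regular if and only if each spectral idempotent $E_i$ (the orthogonal projection onto the $\lambda_i$-eigenspace) has constant diagonal: since $(A^r)_{xx}=\sum_{i=1}^t\lambda_i^r(E_i)_{xx}$ and the $\lambda_i$ are pairwise distinct, the Vandermonde system recovers the numbers $(E_i)_{xx}$ from the closed-walk counts $(A^r)_{xx}$, $0\le r\le t-1$, so walk-regularity yields $(E_i)_{xx}=\mathrm{tr}(E_i)/n=m_i/n$ for all $x$. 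Applying this with $i=j$: as $m_j=1$ the projection $E_j=ww^{\top}$ has rank one, with $w$ a unit $\lambda_j$-eigenvector, and $w_x^{2}=(E_j)_{xx}=1/n$ for every $x$; hence $v:=\sqrt{n}\,w$ is a $\{\pm1\}$-valued $\lambda_j$-eigenvector with $E_j=\tfrac1n vv^{\top}$. (Note this already forces $\lambda_j\in\mathbb{Z}$, since $Av=\lambda_jv$ has an integer left-hand side while $v$ has no zero entry.)

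Next I would extract the partition. Since $G$ is connected and $k$-regular the $k$-eigenspace is spanned by $\mathbf{e}_n$, and as $\lambda_j\ne k$ we get $v\perp\mathbf{e}_n$, that is $\sum_xv_x=0$; hence the cells $P=\{x:v_x=1\}$ and $Q=\{x:v_x=-1\}$ have equal size $n/2$, so $n$ is even. Reading $Av=\lambda_jv$ at a vertex $x\in P$ gives $|N_G(x)\cap P|-|N_G(x)\cap Q|=\lambda_j$, and together with $|N_G(x)\cap P|+|N_G(x)\cap Q|=k$ this yields $|N_G(x)\cap P|=\tfrac12(k+\lambda_j)$, $|N_G(x)\cap Q|=\tfrac12(k-\lambda_j)$; the symmetric computation for $x\in Q$ then shows every vertex has $\tfrac12(k+\lambda_j)$ neighbours in its own cell and $\tfrac12(k-\lambda_j)$ in the other, which is precisely the asserted regular partition into halves.

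For the divisibility I would use the matrix $B=\prod_{i\ne 1,\,j}(A-\lambda_iI)$, the product over all eigenvalues other than $k=\lambda_1$ and $\lambda_j$. This is an integer matrix: it is $p(A)$ for the monic polynomial $p(x)=\prod_{i\ne1,j}(x-\lambda_i)$, which is the quotient of the (monic, integer) minimal polynomial of $A$ by the integer polynomial $(x-k)(x-\lambda_j)$ and hence lies in $\mathbb{Z}[x]$ by Gauss's lemma. On the other hand $B$ annihilates every eigenspace but those of $k$ and $\lambda_j$, on which it acts as the integers $\alpha:=p(k)=\prod_{i\ne1,j}(k-\lambda_i)$ and $\beta:=p(\lambda_j)=\prod_{i\ne1,j}(\lambda_j-\lambda_i)$; therefore
\[
B=\alpha E_1+\beta E_j=\frac{\alpha}{n}J_n+\frac{\beta}{n}vv^{\top},
\]
whose $(x,y)$-entry is $\tfrac1n(\alpha+\beta)$ if $x,y$ lie in the same cell and $\tfrac1n(\alpha-\beta)$ otherwise. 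Both cells being non-empty and $B$ being integral, we conclude $n\mid\alpha+\beta$ and $n\mid\alpha-\beta$, which is the claim.

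The middle steps are routine once the $\{\pm1\}$-eigenvector $v$ is available; the conceptual heart is the first step, where walk-regularity pins down the shape of $v$. The one point I would treat carefully is the integrality of $B=p(A)$ in the last paragraph, which rests on the (a priori not obvious) fact that a simple eigenvalue of a walk-regular graph is automatically an integer — established, conveniently, as a by-product of the first step — so that the linear factors $x-k$ and $x-\lambda_j$ may legitimately be pulled out of the minimal polynomial without leaving $\mathbb{Z}[x]$.
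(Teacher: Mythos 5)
Your proof is correct and follows essentially the same route as the cited source \cite{Dam1} that the paper relies on: walk-regularity forces the rank-one idempotent $E_j$ to have constant diagonal $1/n$, giving the $\pm\frac{1}{\sqrt{n}}$-valued eigenvector (exactly the fact the paper extracts in Corollary \ref{cor-1}), from which the halved regular partition and, via the integral matrix $\prod_{i\neq 1,j}(A-\lambda_iI)=\frac{1}{n}\bigl(\alpha J_n+\beta vv^{\top}\bigr)$, the divisibility conditions follow. Your added care about the integrality of $\lambda_j$ and of the quotient polynomial is exactly the right point to check, and your reading of the products as excluding the index of $k$ matches how the lemma is applied later in the paper.
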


From the proof of Lemma \ref{lem-2}, we  obtain  the following corollary immediately.

\begin{cor}\label{cor-1}
Under the assumption of Lemma \ref{lem-2},  the eigenvector of $\lambda_j$ can be written as $\mathbf{x}_j=\frac{1}{\sqrt{n}}(\mathbf{e}_{\frac{n}{2}}^T,-\mathbf{e}_{\frac{n}{2}}^T)^T$, and the vertex partition $V(G)=V_1\cup V_2$ with $V_1=\{v\in V\mid \mathbf{x}_{j}(v)=1\}$ and $V_2=\{v\in V\mid \mathbf{x}_{j}(v)=-1\}$ is just the regular partition of $G$ into halves with degrees $(\frac{1}{2}(k+\lambda_j),\frac{1}{2}(k-\lambda_j))$ described in Lemma \ref{lem-2}.
\end{cor}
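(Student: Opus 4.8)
The plan is to read the statement off directly from the construction behind Lemma \ref{lem-2}. That lemma already supplies the fact that $n$ is even and that $G$ has a regular partition $V(G)=V_1\cup V_2$ into halves with degrees $\big(\tfrac12(k+\lambda_j),\tfrac12(k-\lambda_j)\big)$; write $a=\tfrac12(k+\lambda_j)$ and $b=\tfrac12(k-\lambda_j)$, so that $a+b=k$, $|V_1|=|V_2|=\tfrac n2$, and every vertex of $V_i$ has exactly $a$ neighbours inside $V_i$ and $b$ neighbours in the other part. After reordering the vertices so that those of $V_1$ come first, I would work with the $\pm 1$ indicator vector $\mathbf y=(\mathbf e_{n/2}^T,-\mathbf e_{n/2}^T)^T$ of this partition.

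The first step is to check that $\mathbf y$ is a $\lambda_j$-eigenvector: for $v\in V_1$ one computes $(A\mathbf y)(v)=a-b$ and for $v\in V_2$ one computes $(A\mathbf y)(v)=b-a$, hence $A\mathbf y=(a-b)\mathbf y=\lambda_j\mathbf y$. The second step invokes the hypothesis $m_j=1$: the $\lambda_j$-eigenspace is then the line spanned by $\mathbf y$, and since $\|\mathbf y\|^2=n$, the unit eigenvector of $\lambda_j$ equals $\mathbf x_j=\pm\tfrac{1}{\sqrt n}\mathbf y$; fixing the orientation so that $V_1$ is the part on which $\mathbf x_j$ is positive gives precisely $\mathbf x_j=\tfrac{1}{\sqrt n}(\mathbf e_{n/2}^T,-\mathbf e_{n/2}^T)^T$. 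Finally, reading off signs yields $V_1=\{v\in V\mid \mathbf x_j(v)>0\}$ and $V_2=\{v\in V\mid \mathbf x_j(v)<0\}$, so the sign partition determined by $\mathbf x_j$ is exactly the regular partition into halves from Lemma \ref{lem-2}. This partition is moreover unique up to interchanging $V_1$ and $V_2$, because any regular partition into halves with degrees $(a,b)$ would, by the computation above, produce a $\pm 1$ eigenvector for $\lambda_j$, which must be a scalar multiple of $\mathbf y$ and hence $\pm\mathbf y$.

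Since every step is forced once Lemma \ref{lem-2} is available, I do not expect a genuine obstacle here; the only mild points of care are that $a$ and $b$ (and therefore $\lambda_j$) are automatically integers, being neighbour counts — which is consistent with Lemma \ref{lem-1} — and that the multiplicity-one assumption is exactly what lets one pass from ``$\mathbf y$ is a $\lambda_j$-eigenvector'' to ``$\mathbf x_j$ is proportional to $\mathbf y$'', making the corollary immediate as claimed in the text.
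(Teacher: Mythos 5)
Your proof is correct, but it runs in the opposite direction from the paper's. The paper obtains Corollary \ref{cor-1} ``from the proof of Lemma \ref{lem-2}'' (van Dam's argument): walk-regularity (Lemma \ref{lem-5}) forces the diagonal entries of the spectral idempotent $P_j=\mathbf{x}_j\mathbf{x}_j^T$ to be constant, equal to $m_j/n=1/n$, so every entry of $\mathbf{x}_j$ is $\pm\frac{1}{\sqrt{n}}$; orthogonality to $\mathbf{e}_n$ balances the two signs (whence $n$ is even), and evaluating $A\mathbf{x}_j=\lambda_j\mathbf{x}_j$ at each vertex shows that the sign classes form the regular partition into halves with degrees $(\frac{1}{2}(k+\lambda_j),\frac{1}{2}(k-\lambda_j))$; the corollary just records this. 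You instead take the partition already guaranteed by the \emph{statement} of Lemma \ref{lem-2}, check that its signed indicator vector $\mathbf{y}$ satisfies $A\mathbf{y}=(a-b)\mathbf{y}=\lambda_j\mathbf{y}$, and invoke the multiplicity-one hypothesis to get $\mathbf{x}_j=\pm\frac{1}{\sqrt{n}}\mathbf{y}$, so the sign partition of $\mathbf{x}_j$ is exactly that regular partition. Your route is more elementary in that it needs no further appeal to walk-regularity beyond what is packaged in Lemma \ref{lem-2}, and it has the small bonus of showing the regular partition into halves with these degrees is unique up to interchanging the parts, so ``the regular partition described in Lemma \ref{lem-2}'' is unambiguous; the trade-off is that the constant modulus of the entries of $\mathbf{x}_j$ is inherited from the cited partition rather than derived intrinsically, as the idempotent argument does. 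Either derivation supports the later uses of the corollary in Theorem \ref{thm-main-0} and Lemma \ref{lem-3}.
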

A \emph{balanced incomplete block design}, denoted by \emph{BIBD}, consists of $v$ elements and $b$ subsets of these elements called \emph{blocks} such that each element is contained
in $t$ blocks,  each block contains $k$ elements, and  each pair of elements is simultaneously contained in $\lambda$ blocks (see \cite{Cvetkovic}). The integers $(v,b,r,k,\lambda)$ are called the \emph{parameters} of the design. In the case $r=k$ (and then $v=b$) the design is called \emph{symmetric} with parameters $(v,k,\lambda)$.

The \emph{incidence graph} of a $BIBD$ is the bipartite graph on  $b+v$ vertices (correspond to the blocks and elements of the design) with two vertices adjacent if and only if one corresponds to a block and the other corresponds to an element contained in that block. As   shown in \cite{Cvetkovic},  the incidence graph has spectrum $\big\{[\sqrt{rk}]^1,[\sqrt{r-\lambda}]^{v-1},[0]^{b-v},[-\sqrt{r-\lambda}]^{v-1},$ $[-\sqrt{rk}]^1\big\}$. In particular, if the design is symmetric,  then the incidence graph is a $k$-regular bipartite graph with spectrum $\big\{[k]^1,[\sqrt{k-\lambda}]^{v-1},[-\sqrt{k-\lambda}]^{v-1},[-k]^1\big\}$.

The following lemma gives a characterization of regular bipartite graphs with four distinct eigenvalues.
\begin{lem}\label{lem-4}(See \cite{Brouwer,Cvetkovic}.)
A connected regular bipartite  graph $G$ with four distinct eigenvalues
is the incidence graph of a symmetric BIBD.
\end{lem}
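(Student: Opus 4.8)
The plan is to read off the structure of a symmetric BIBD directly from the adjacency matrix of $G$; this is the classical argument. Let $k$ be the degree of $G$. Since $G$ is connected, $k$-regular and bipartite, its spectrum is symmetric about $0$, $k$ is a simple eigenvalue, and, as there are exactly four distinct eigenvalues, the spectrum must be $\{[k]^{1},[\theta]^{m},[-\theta]^{m},[-k]^{1}\}$ for some $\theta>0$ with $n=2m+2$. Write $V(G)=X\cup Y$ for the bipartition and order the vertices as $X$ then $Y$, so that $A=\left(\begin{smallmatrix}0&N\\N^{T}&0\end{smallmatrix}\right)$ with $N$ the biadjacency matrix. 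Counting edges gives $k|X|=|E(G)|=k|Y|$, hence $|X|=|Y|=v:=n/2$, and $k$-regularity says $N$ is a square $(0,1)$-matrix all of whose row and column sums equal $k$.

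Next I would pass to $A^{2}=\mathrm{diag}(NN^{T},N^{T}N)$, whose spectrum is $\{[k^{2}]^{2},[\theta^{2}]^{2m}\}$ (each eigenvalue of $A$ contributing its square, the pairs $\{k,-k\}$ and $\{\theta,-\theta\}$ each coalescing). So the eigenvalue multisets of $NN^{T}$ and of $N^{T}N$ together form this spectrum. The key bookkeeping step is to locate the two copies of $k^{2}$: the eigenvectors of $A$ for $k$ and for $-k$ are $\mathbf{e}_{n}$ and $(\mathbf{e}_{v}^{T},-\mathbf{e}_{v}^{T})^{T}$, and both restrict to $\mathbf{e}_{v}$ on $X$, so $NN^{T}$ (and likewise $N^{T}N$) has $k^{2}$ as an eigenvalue of multiplicity exactly $1$ with eigenvector $\mathbf{e}_{v}$, all remaining eigenvalues being $\theta^{2}$. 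Hence
$$NN^{T}=N^{T}N=\theta^{2}I_{v}+\tfrac{k^{2}-\theta^{2}}{v}\,J_{v}.$$
Since the diagonal entries of $NN^{T}$ equal $k$ and its off-diagonal entries count common neighbours, $\lambda:=\frac{k^{2}-\theta^{2}}{v}$ is a non-negative integer, $\theta^{2}=k-\lambda$, and $NN^{T}=N^{T}N=(k-\lambda)I_{v}+\lambda J_{v}$.

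Finally I would read off the combinatorics: regarding $X$ as a set of $v$ blocks and $Y$ as a set of $v$ elements (an element lying in a block precisely when the corresponding entry of $N$ is $1$), the row sums of $N$ say each block contains $k$ elements, the column sums say each element lies in $k$ blocks, and $N^{T}N=(k-\lambda)I_{v}+\lambda J_{v}$ says each pair of elements lies together in exactly $\lambda$ blocks. As the numbers of blocks and of elements both equal $v$, this is a symmetric BIBD with parameters $(v,k,\lambda)$ whose incidence graph is $G$. The only point requiring genuine care is to confirm the design is non-degenerate, i.e., $\lambda\ge 1$ and $2\le k<v$: if $\lambda=0$ then $NN^{T}=kI_{v}$ forces the rows of $N$ to have pairwise disjoint supports, hence $k=1$ and $G$ a union of edges, contradicting connectedness; and if $k=v$ then $N=J_{v}$ has rank $1$, contradicting $\det A=\pm(\det N)^{2}\ne 0$ (which holds since $0$ is not an eigenvalue of $G$). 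Connectedness together with the presence of four distinct eigenvalues excludes both, completing the proof. The main obstacle, then, is not any deep computation but the bookkeeping of how $k^{2}$ splits across the two blocks of $A^{2}$ and the verification of non-degeneracy.
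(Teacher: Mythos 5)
Your proof is correct, and since the paper states this lemma without proof (citing Brouwer--Haemers and Cvetkovi\'c--Doob--Sachs), the right comparison is with the classical argument in those references, which is exactly what you reproduce: symmetry of the bipartite spectrum, passing to $A^2$ to get $NN^{T}=N^{T}N=(k-\lambda)I_v+\lambda J_v$, and reading off the symmetric design, including the non-degeneracy check. No gaps; the bookkeeping of the two copies of $k^2$ via the eigenvectors of $\pm k$ is handled correctly.
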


\begin{figure}[t]
\centering
\begin{center}
\unitlength 1.75mm % = 2.845pt
\linethickness{0.8pt}
\ifx\plotpoint\undefined\newsavebox{\plotpoint}\fi % GNUPLOT compatibility
\begin{picture}(52,27)(0,0)
\put(2,11){\circle*{1}}
\put(10,11){\circle*{1}}
\put(18,11){\circle*{1}}
\put(2,5){\circle*{1}}
\put(10,5){\circle*{1}}
\put(18,5){\circle*{1}}
\put(2,8){\oval(4,10)[]}
\put(10,8){\oval(4,10)[]}
\put(18,8){\oval(4,10)[]}
\put(25,8){\circle*{1}}
\put(20,8){\line(1,0){5}}
\put(4,8){\line(1,0){4}}
\put(12,8){\line(1,0){4}}
\put(36,11){\circle*{1}}
\put(44,11){\circle*{1}}
\put(36,5){\circle*{1}}
\put(44,5){\circle*{1}}
\put(36,8){\oval(4,10)[]}
\put(44,8){\oval(4,10)[]}
\put(38,8){\line(1,0){4}}
\put(36,13){\line(0,1){4}}
\put(44,13){\line(0,1){4}}
\put(36,25){\circle*{1}}
\put(44,25){\circle*{1}}
\put(36,19){\circle*{1}}
\put(44,19){\circle*{1}}
\put(36,22){\oval(4,10)[]}
\put(44,22){\oval(4,10)[]}
\put(38,22){\line(1,0){4}}
\put(46,22){\line(1,0){5}}
\put(46,8){\line(1,0){5}}
\put(51,22){\circle*{1}}
\put(51,8){\circle*{1}}
\put(2.1,8.5){\makebox(0,0)[cc]{$\vdots$}}
\put(10.1,8.5){\makebox(0,0)[cc]{$\vdots$}}
\put(18.1,8.5){\makebox(0,0)[cc]{$\vdots$}}
\put(36.1,22.5){\makebox(0,0)[cc]{$\vdots$}}
\put(36.1,8.5){\makebox(0,0)[cc]{$\vdots$}}
\put(36.1,22.5){\makebox(0,0)[cc]{$\vdots$}}
\put(44.1,8.5){\makebox(0,0)[cc]{$\vdots$}}
\put(44.1,22.5){\makebox(0,0)[cc]{$\vdots$}}
\put(1,11){\makebox(0,0)[cc]{\scriptsize$1$}}
\put(1,5){\makebox(0,0)[cc]{\scriptsize$l$}}
\put(9,11){\makebox(0,0)[cc]{\scriptsize$1$}}
\put(9,5){\makebox(0,0)[cc]{\scriptsize$m$}}
\put(17,11){\makebox(0,0)[cc]{\scriptsize$1$}}
\put(17,5){\makebox(0,0)[cc]{\scriptsize$n$}}
\put(35,11){\makebox(0,0)[cc]{\scriptsize$1$}}
\put(35,5){\makebox(0,0)[cc]{\scriptsize$m$}}
\put(43,11){\makebox(0,0)[cc]{\scriptsize$1$}}
\put(43,5){\makebox(0,0)[cc]{\scriptsize$p$}}
\put(35,25){\makebox(0,0)[cc]{\scriptsize$1$}}
\put(35,19){\makebox(0,0)[cc]{\scriptsize$l$}}
\put(43,25){\makebox(0,0)[cc]{\scriptsize$1$}}
\put(43,19){\makebox(0,0)[cc]{\scriptsize$n$}}
\put(11,0){\makebox(0,0)[cc]{\footnotesize$A(l,m,n)$}}
\put(41,0){\makebox(0,0)[cc]{\footnotesize$B(l,m,n,p)$}}
\end{picture}
\end{center}
\vspace{-0.4cm}
\caption{\footnotesize{The graphs $A(l,m,n)$ and $B(l,m,n,p)$.}}\label{fig-1}
\end{figure}
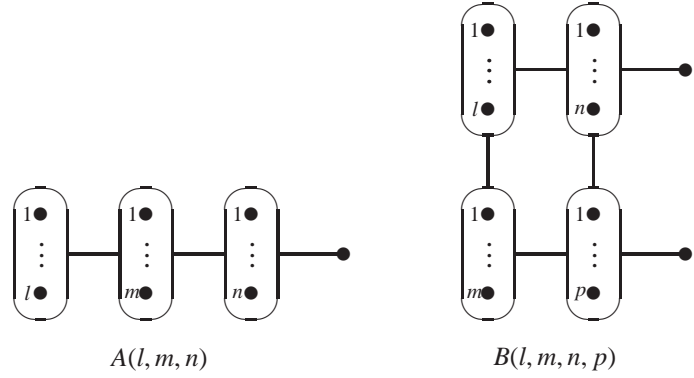
Denote by $A(l,m,n)$ and $B(l,m,n,p)$ ($l,m,n,p\geq 1$) the two graphs from Fig.\ref{fig-1}, where the vertices contained in an ellipse form an independent set, and any two ellipses or a vertex and an ellipse joined with one  line denote a complete bipartite graph.
\begin{lem}\label{lem-6} (See \cite{Torgasev}.)
The second least eigenvalue of a connected graph $G$ is  greater than $-1$ if and only if
\begin{enumerate}[(i)]
\vspace{-0.3cm}
\item $G=K_{m,n}$ with $m,n\geq 1$, or
\vspace{-0.3cm}
\item $G=A(l,m,n)$ with one of the following cases occours: $n=1$ and $m,l\geq 1$;  $n=l=2$ and $m\geq 1$;  $n\geq 2$ and $l=m=1$; $n\geq 2$, $l=1$ and $m\geq 1$, or
\vspace{-0.3cm}
\item $G=B(l,m,n,p)$ with $(p+l-pl)(m+n-mn)>(p-1)(n-1)$.
\end{enumerate}
\end{lem}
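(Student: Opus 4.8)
The plan is to combine Cauchy interlacing with a structural reduction. Write $\lambda_1(H)\ge\cdots\ge\lambda_{|H|}(H)$ for the eigenvalues of a graph $H$. Interlacing for principal submatrices gives, for every induced subgraph $H$ of $G$, the inequality $\lambda_{n-1}(G)\le\lambda_{|H|-1}(H)$; hence a connected graph $G$ has $\lambda_{n-1}(G)>-1$ if and only if it contains no induced subgraph whose second least eigenvalue is $\le -1$. So the first task is to locate the small obstructions. A direct check gives $\mathrm{Spec}(K_3)=\{[2]^1,[-1]^2\}$, $\mathrm{Spec}(2K_2)=\{[1]^2,[-1]^2\}$ and $\mathrm{Spec}(C_5)=\{[2]^1,[\tfrac{-1+\sqrt5}{2}]^2,[\tfrac{-1-\sqrt5}{2}]^2\}$, each with second least eigenvalue $\le -1$. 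Consequently any $G$ with $\lambda_{n-1}(G)>-1$ must be triangle-free, contain no induced $2K_2$, and contain no induced $C_5$.

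Next I would exploit these restrictions structurally. Fixing a maximum-degree vertex (or an edge) and partitioning the remaining vertices according to their adjacency pattern to it, triangle-freeness forces each class to be an independent set, while the absence of an induced $2K_2$ forces any two such classes to be either completely joined or completely non-adjacent; iterating this collapses $G$ into a ``blow-up'' of a skeleton of bounded size. Pushing this analysis to its conclusion, and discarding the non-bipartite possibility (already removed by $C_5$-freeness), should leave precisely the complete bipartite graphs $K_{m,n}$ together with the two families $A(l,m,n)$ and $B(l,m,n,p)$ of Fig.\ref{fig-1}. I expect this classification to be the main obstacle: keeping track of which blobs are joined, ruling out every other connected configuration, and checking that the survivors are exactly $K_{m,n}$, $A(l,m,n)$ and $B(l,m,n,p)$, is where the real work lies.

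Finally, inside each family the second least eigenvalue can be computed exactly, which both finishes the ``only if'' direction and yields the precise parameter conditions in (i)--(iii). In $K_{m,n}$, $A(l,m,n)$ and $B(l,m,n,p)$ the independent blobs form an equitable partition, so the spectrum consists of the eigenvalues of the (small) quotient matrix together with a block of $0$'s arising from vectors supported on a single blob and summing to zero there. For $K_{m,n}$ the quotient has eigenvalues $\pm\sqrt{mn}$, so $\lambda_{n-1}=0>-1$ always, giving (i). For $A(l,m,n)$ the quotient is $4\times 4$ with characteristic polynomial a quadratic in $x^2$; denoting its smaller positive root by $\beta$, one gets $\lambda_{n-1}(A(l,m,n))=-\beta$, and the inequality $\beta<1$ unwinds, after clearing radicals, into exactly the alternatives in (ii). For $B(l,m,n,p)$ the quotient is $6\times 6$; reducing it by the obvious symmetry gives a low-degree polynomial whose relevant root is $<1$ precisely when $(p+l-pl)(m+n-mn)>(p-1)(n-1)$, which is (iii). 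Conversely, when the displayed condition fails the corresponding root is $\ge 1$, so $\lambda_{n-1}\le -1$ and such a graph does not occur; this shows the list is complete. The remaining obstacle here is purely computational: evaluating the quotient determinants and massaging the resulting polynomial inequalities into the clean forms stated in (ii) and (iii).
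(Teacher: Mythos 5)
First, note that the paper does not prove this lemma at all: it is quoted verbatim from Torga\v{s}ev \cite{Torgasev}, so there is no internal proof to compare with, and your proposal has to stand on its own. Its first step is fine: by interlacing, the property ``second least eigenvalue $>-1$'' is hereditary, and $K_3$, $2K_2$ and $C_5$ (with second least eigenvalues $-1$, $-1$ and $\frac{-1-\sqrt5}{2}$) are indeed forbidden induced subgraphs, which forces $G$ to be bipartite and $2K_2$-free. The final step (equitable partitions, quotient matrices, the eigenvalue $0$ on vectors supported in a single blob) is also sound in principle, though only sketched.

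The genuine gap is the middle, structural step. Being connected, triangle-free, $C_5$-free and $2K_2$-free only says that $G$ is a connected bipartite $2K_2$-free graph, i.e.\ a chain graph: the neighbourhoods on each side form a chain under inclusion. Such graphs are blow-ups of ``half graphs'' with an \emph{arbitrary} number of distinct neighbourhoods per side, so nothing in these three obstructions collapses $G$ to a skeleton of bounded size, and the claimed conclusion ``this leaves precisely $K_{m,n}$, $A(l,m,n)$, $B(l,m,n,p)$'' is false. A concrete counterexample: the double star obtained from the blob-path with blob sizes $2,1,1,2$ (two adjacent centres, each with two pendant vertices) is triangle-free, $C_5$-free and $2K_2$-free, is none of $K_{m,n}$, $A(l,m,n)$, $B(l,m,n,p)$ (in $A$ the end blob is a single vertex), and its spectrum is $\{[2]^1,[1]^1,[0]^2,[-1]^1,[-2]^1\}$, so its second least eigenvalue is exactly $-1$; chain graphs with four or more neighbourhood levels give infinitely many further examples. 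Ruling these out requires re-using the spectral hypothesis (equivalently, an infinite list of forbidden induced subgraphs, or quotient computations for general chain graphs) to bound the number of levels by three and to force the singleton blobs in $A$ and $B$ as well as the parameter inequalities in (ii)--(iii); that is the real content of Torga\v{s}ev's theorem and it is absent from the plan. Moreover the specific mechanism you propose is itself incorrect: $2K_2$-freeness does \emph{not} make two adjacency classes either completely joined or completely non-adjacent (it only nests neighbourhoods), and this homogeneity already fails inside the target family $B(l,m,n,p)$ of Fig.\ref{fig-1}, e.g.\ for the classes determined by an edge between the blobs of sizes $n$ and $p$.
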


\section{Main results}\label{s-3}

Let $A$ be a real symmetric matrix whose all distinct eigenvalues are $\lambda_1,\ldots,\lambda_s$. Then $A$ has the \emph{spectral decomposition} $A=\lambda_1P_1+\cdots+\lambda_sP_s$, where $P_i=\mathbf{x}_1\mathbf{x}_1^T+\cdots+\mathbf{x}_{d}\mathbf{x}_{d}^T$ if the eigenspace $\mathcal{E}(\lambda_i)$ has $\{\mathbf{x}_1,\ldots,\mathbf{x}_d\}$ as an orthonormal basis. It is easy to see that  for any real polynomial $f(x)$, we have $f(A)=f(\lambda_1)P_1+\cdots+f(\lambda_s)P_s$.

First of all, we will prove the non-existence of connected regular graphs with four distinct eigenvalues in which at least three eigenvalues are simple.

\begin{thm}\label{thm-main-0}
There are no connected  $k$-regular graphs on $n$ ($n\geq 4$) vertices with spectrum $\big\{[k]^1,[\lambda_2]^1,[\lambda_3]^1,[\lambda_4]^{n-3}\big\}$.
\end{thm}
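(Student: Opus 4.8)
The plan is to assume such a graph $G$ exists and exploit its two nontrivial simple eigenvalues $\lambda_2,\lambda_3$ to pin down the structure of $G$ so tightly that only a short list of candidates remains, all of which are then eliminated by inspecting their spectra. Since $G$ is connected and regular with four distinct eigenvalues, it is walk-regular by Lemma~\ref{lem-5}, so Lemma~\ref{lem-2} and Corollary~\ref{cor-1} apply with $\lambda_j=\lambda_2$ and with $\lambda_j=\lambda_3$. Hence one may pick unit eigenvectors $\mathbf{x}_2\in\mathcal{E}(\lambda_2)$ and $\mathbf{x}_3\in\mathcal{E}(\lambda_3)$, each having exactly $\frac n2$ entries equal to $\frac1{\sqrt n}$ and $\frac n2$ equal to $-\frac1{\sqrt n}$; together with $\mathbf{x}_1=\frac1{\sqrt n}\mathbf{e}_n$ they are pairwise orthogonal. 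I would then partition $V(G)$ into the four sign classes $C_{\varepsilon\varepsilon'}=\{v:\sqrt n\,\mathbf{x}_2(v)=\varepsilon,\ \sqrt n\,\mathbf{x}_3(v)=\varepsilon'\}$ for $\varepsilon,\varepsilon'\in\{1,-1\}$. Comparing the class sizes by means of $\sum_{\varepsilon'}|C_{+\varepsilon'}|=\sum_{\varepsilon'}|C_{-\varepsilon'}|=\frac n2$, the analogous identities coming from $\mathbf{x}_3$, and $\mathbf{x}_2^{T}\mathbf{x}_3=0$, one obtains $4\mid n$ and $|C_{\varepsilon\varepsilon'}|=\frac n4$ for all four classes.

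The crucial step is to analyse the off-diagonal entries of the spectral decomposition. Let $\lambda_4$ denote the eigenvalue of multiplicity $n-3$. Since eigenspaces of distinct eigenvalues are orthogonal, $\mathcal{E}(\lambda_4)^{\perp}=\langle\mathbf{x}_1,\mathbf{x}_2,\mathbf{x}_3\rangle$, whence
\[
A-\lambda_4 I_n=(k-\lambda_4)\mathbf{x}_1\mathbf{x}_1^{T}+(\lambda_2-\lambda_4)\mathbf{x}_2\mathbf{x}_2^{T}+(\lambda_3-\lambda_4)\mathbf{x}_3\mathbf{x}_3^{T}.
\]
Reading off the $(u,v)$-entry for $u\neq v$ gives
\[
A_{uv}=\tfrac1n\Big[(k-\lambda_4)+\varepsilon_2(\lambda_2-\lambda_4)+\varepsilon_3(\lambda_3-\lambda_4)\Big],
\]
where $\varepsilon_i=n\,\mathbf{x}_i(u)\mathbf{x}_i(v)\in\{1,-1\}$ depends only on which sign classes contain $u$ and $v$. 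Thus whether or not $u\sim v$ is determined solely by the unordered pair of classes to which $u,v$ belong. In particular, two distinct vertices of one class always give $(\varepsilon_2,\varepsilon_3)=(1,1)$, so the four classes are simultaneously cliques or simultaneously cocliques, and each of the six pairs of classes is either completely joined or completely non-joined. Consequently $G$ is obtained from some graph $H$ on four vertices (its vertices being the classes) by substituting for each vertex a clique on $\frac n4$ vertices, or its complement — the same choice throughout — with adjacent substituted parts in $G$ corresponding exactly to edges of $H$.

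To conclude, regularity of $G$ forces all $\deg_H(i)$ to coincide, so $H$ is regular, and connectedness of $G$ forces $H$ connected; the only connected regular graphs on four vertices are $C_4$ and $K_4$. When $\frac n4=1$ this means $G\in\{C_4,K_4\}$, which have only three and two distinct eigenvalues; when $s:=\frac n4\geq2$, the graph $G$ is one of $K_{2s,2s}$, $K_{s,s,s,s}$, $K_n$ (the first three arising from $C_4$, $K_4$ with cocliques and from $K_4$ with cliques) or the complement of the disjoint union of two copies of $K_{s,s}$ (from $C_4$ with cliques). The first three have at most three distinct eigenvalues, and the last has spectrum $\big\{[3s-1]^1,[s-1]^2,[-1]^{n-4},[-s-1]^1\big\}$, whose multiplicity list $1,2,n-4,1$ is not $1,1,1,n-3$; in every case this contradicts the assumed spectrum, so no such $G$ exists. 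The only part requiring genuine care is the counting that forces the four sign classes to have equal size $\frac n4$ (and hence $4\mid n$), together with the observation that the $\pm1/n$-valued off-diagonal entries of $A-\lambda_4 I_n$ collapse the adjacency relation onto these four classes; after $G$ is recognised as such a substitution graph, everything that remains is routine bookkeeping.
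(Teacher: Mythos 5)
Your proposal is correct, and it shares the paper's foundation but finishes differently. The setup is identical: walk-regularity (Lemma \ref{lem-5}), Lemma \ref{lem-2} and Corollary \ref{cor-1} applied to both simple eigenvalues $\lambda_2,\lambda_3$, the spectral decomposition of $A-\lambda_4I_n$, and the partition into four sign classes of size $\tfrac n4$ (your derivation of the equal class sizes from the two half-size conditions plus $\mathbf{x}_2^{T}\mathbf{x}_3=0$ is exactly the justification the paper compresses into ``we may assume'' when writing $\mathbf{x}_3$ in the interleaved form, so this step is sound). After that the two arguments diverge. The paper stays algebraic: it adjoins the trace identities for $A$ and $A^2$, reads off only the blocks $A(V_1,V_1)$ and $A(V_1,V_4)$, and runs a case analysis on $\lambda_4$ ($\lambda_4=0$; then $\lambda_4\neq0$ forces $\lambda_4=-1$ when $n\geq 8$, leading to $k=n-1$ or an inconsistent system; finally $n=4$ handled directly), reaching numerical contradictions without ever identifying $G$. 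You instead observe that every off-diagonal entry of $A$ depends only on the pair of sign classes, so for $n\geq 8$ the graph $G$ is a uniform clique- or coclique-blow-up of a connected regular graph $H$ on four vertices, forcing $H\in\{C_4,K_4\}$, and you kill the four resulting families ($K_{2s,2s}$, $K_{s,s,s,s}$, $K_n$, and $C_4\circledast J_s=\overline{2K_{s,s}}$) by comparing spectra — the last one via the multiplicity pattern $1,2,n-4,1$, which indeed cannot equal $1,1,1,n-3$; the case $n=4$ is handled directly as in the paper. Your route costs a little more structural bookkeeping but buys a complete description of the candidate graphs (and dispenses with the trace computations), whereas the paper's case analysis on $\lambda_4$ is shorter on structure and heavier on arithmetic; both are valid proofs of the theorem.
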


\begin{proof}
Suppose that $G$ is a connected $k$-regular graph on $n$ vertices  with adjacency matrix $A$ and spectrum $\big\{[k]^1,[\lambda_2]^1,$ $[\lambda_3]^1,[\lambda_4]^{n-3}\big\}$. Then $G$ has minimal polynomial $p(x)=(x-k)(x-\lambda_2)(x-\lambda_3)(x-\lambda_4)$. By Lemmas \ref{lem-5} and \ref{lem-2}, $\lambda_2$ and $\lambda_3$ are integers,  so $\lambda_4$ is also an integer. Furthermore, by Corollary \ref{cor-1}, we may assume that $\lambda_2$ and $\lambda_3$, respectively, have orthonormal eigenvectors as follows:
\begin{equation*}
\mathbf{x}_2=\frac{1}{\sqrt{n}}(\mathbf{e}_\frac{n}{2}^T,-\mathbf{e}_\frac{n}{2}^T)^T\ \mbox{ and }\ \mathbf{x}_3=\frac{1}{\sqrt{n}}(\mathbf{e}_\frac{n}{4}^T,-\mathbf{e}_\frac{n}{4}^T,\mathbf{e}_\frac{n}{4}^T,-\mathbf{e}_\frac{n}{4}^T)^T.
\end{equation*}
Taking $f(x)=x-\lambda_4$, by the spectral decomposition of $f(A)$ we get
\begin{equation*}
A-\lambda_4I_n=\frac{1}{n}(k-\lambda_4)\mathbf{e}_n\mathbf{e}_n^T+(\lambda_2-\lambda_4)\mathbf{x}_2\mathbf{x}_2^T+(\lambda_3-\lambda_4)\mathbf{x}_3\mathbf{x}_3^T,
\end{equation*}
or equivalently,
\begin{equation}\label{equ-1}
n(A-\lambda_4I_n)=(k-\lambda_4)J_n+(\lambda_2-\lambda_4)
\left(\begin{matrix}
J_\frac{n}{2}&-J_\frac{n}{2}\\
-J_\frac{n}{2}&J_\frac{n}{2}
\end{matrix}\right)
+(\lambda_3-\lambda_4)
\left(\begin{smallmatrix}J_\frac{n}{4}&-J_\frac{n}{4}&J_\frac{n}{4}&-J_\frac{n}{4}\\
-J_\frac{n}{4}&J_\frac{n}{4}&-J_\frac{n}{4}&J_\frac{n}{4}\\
J_\frac{n}{4}&-J_\frac{n}{4}&J_\frac{n}{4}&-J_\frac{n}{4}\\
-J_\frac{n}{4}&J_\frac{n}{4}&-J_\frac{n}{4}&J_\frac{n}{4}\\
\end{smallmatrix}\right).
\end{equation}
On the other hand, by considering the traces of $A$ and $A^2$, respectively, we obtain
\begin{align}
&k+\lambda_2+\lambda_3+(n-3)\lambda_4=0,\label{equ-2}\\
&k^2+\lambda_2^2+\lambda_3^2+(n-3)\lambda_4^2=kn.\label{equ-3}
\end{align}

Now we partition $V(G)$ the same way as we partition the matrix $\mathbf{x}_3\mathbf{x}_3^T$ in (\ref{equ-1}), and denote by $V_1,V_2,V_3,V_4$ the corresponding vertex subsets, respectively. By considering the block matrix $A(V_1,V_4)$ in (\ref{equ-1}),  we have
\begin{equation}\label{equ-4}
nA(V_1,V_4)=((k-\lambda_4)-(\lambda_2-\lambda_4)-(\lambda_3-\lambda_4))J_{\frac{n}{4}}.
\end{equation}

First suppose that $\lambda_4=0$. From (\ref{equ-2}) and (\ref{equ-4}) we know that $nA(V_{1},V_{4})=(k-\lambda_2-\lambda_3)J_{\frac{n}{4}}=2kJ_{\frac{n}{4}}$. Thus $n=2k$ and $A(V_{1},V_{4})=J_{\frac{n}{4}}$ because $k\neq 0$. Putting $n=2k$ in (\ref{equ-3}), we get $\lambda_2^2+\lambda_3^2=k^2$, and so $\lambda_2\lambda_3=0$ by (\ref{equ-2}). This implies that $\lambda_2=0$ or $\lambda_3=0$, which is a contradiction because $\lambda_2$, $\lambda_3$ and $\lambda_4$ are distinct.

Now we can assume that  $\lambda_4\neq0$. For the block matrix $A(V_{1},V_{1})$,  from (\ref{equ-1}) and (\ref{equ-2})   it is seen that
$n(A(V_{1},V_{1})-\lambda_4I_{\frac{n}{4}})=((k-\lambda_4)+(\lambda_2-\lambda_4)+(\lambda_3-\lambda_4))J_{\frac{n}{4}}=-n\lambda_4J_{\frac{n}{4}}$,
that is, $A(V_{1},V_{1})=-\lambda_4(J_{\frac{n}{4}}-I_{\frac{n}{4}})$.
If $n\geq 8$, then $J_{\frac{n}{4}}-I_{\frac{n}{4}}\neq 0$. Thus $\lambda_4=-1$  because $\lambda_4\neq 0$. Putting $\lambda_4=-1$ in (\ref{equ-4}), we get $nA(V_{1},V_{4})=(k-\lambda_2-\lambda_3-1)J_{\frac{n}{4}}$. Then $A(V_{1},V_{4})=0$ or $A(V_{1},V_{4})=J_{\frac{n}{4}}$. If $A(V_{1},V_{4})=0$, we have $k-\lambda_2-\lambda_3-1=0$. Then from (\ref{equ-2}) and (\ref{equ-3}), we get $\lambda_2=k$ and $\lambda_3=-1$, or $\lambda_2=-1$ and $\lambda_3=k$, a contradiction.
Thus $A(V_{1},V_{4})=J_{\frac{n}{4}}$, and so  $k-\lambda_2-\lambda_3-1=n$. Again from (\ref{equ-2}), we get $k=n-1$, which implies that $G$ is a complete graph, a contradiction. If $n<8$, from the above arguments we see that $\frac{n}{4}$ is an integer, so $n=4$. Then $G=K_4$ or $G=C_4$ because $G$ is a connected regular graph. In both cases, $G$ has at most three distinct eigenvalues.

We complete the proof.
\end{proof}

Recall that $\mathcal{G}(4,2)$ denotes the set of connected regular graphs with four distinct eigenvalues in which exactly two eigenvalues are simple. The following lemma provides a necessary  condition for the graphs belonging to $\mathcal{G}(4,2)$.

\begin{lem}\label{lem-3}
If $G$ is a connected $k$-regular graph on $n$ vertices with spectrum $\big\{[k]^1,[\lambda_2]^1,$ $[\lambda_3]^{m}, [\lambda_4]^{n-2-m}\big\}$ ($2\leq m\leq n-4$), then $G$ admits a  regular partition $V(G)=V_1\cup V_2$ into halves  with degrees $(\frac{1}{2}(k+\lambda_2),\frac{1}{2}(k-\lambda_2))$ such that
\begin{equation*}
|N_G(u) \cap N_G(v)|\!=\!\left\{
\begin{array}{ll}
\lambda_3\!+\!\lambda_4\!+\!\frac{1}{n}[(k\!-\!\lambda_3)(k\!-\!\lambda_4)\!+\!(\lambda_2\!-\!\lambda_3)(\lambda_2\!-\!\lambda_4)]&\mbox{if  $u\sim v$ in $V_1$ or $V_2$};\\
\frac{1}{n}[(k\!-\!\lambda_3)(k\!-\!\lambda_4)\!+\!(\lambda_2\!-\!\lambda_3)(\lambda_2\!-\!\lambda_4)]&\mbox{if  $u\nsim v$ in $V_1$ or $V_2$};\\
\lambda_3\!+\!\lambda_4\!+\!\frac{1}{n}[(k\!-\!\lambda_3)(k\!-\!\lambda_4)\!-\!(\lambda_2\!-\!\lambda_3)(\lambda_2\!-\!\lambda_4)]&\mbox{if  $u\sim v$, $u\in V_1$, $v\in V_2$};\\
\frac{1}{n}[(k\!-\!\lambda_3)(k\!-\!\lambda_4)\!-\!(\lambda_2\!-\!\lambda_3)(\lambda_2\!-\!\lambda_4)]&\mbox{if  $u\nsim v$, $u\in V_1$, $v\in V_2$}.
\end{array}
\right.
\end{equation*}
\end{lem}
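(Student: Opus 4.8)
My plan is to read the four common-neighbour counts directly off the spectral decomposition of $A$, after first locating the halves-partition supplied by the structure theory already in hand.

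The first step is to set up the partition. Since $G$ is connected and regular with four distinct eigenvalues, Lemma~\ref{lem-5} gives that $G$ is walk-regular, and since $\lambda_2$ is a simple eigenvalue, Lemma~\ref{lem-2} applies: $n$ is even and $G$ admits a regular partition into halves with degrees $(\frac12(k+\lambda_2),\frac12(k-\lambda_2))$. By Corollary~\ref{cor-1} this partition $V(G)=V_1\cup V_2$ can be chosen so that $\lambda_2$ has orthonormal eigenvector $\mathbf{x}_2=\frac{1}{\sqrt n}(\mathbf{e}_{n/2}^{T},-\mathbf{e}_{n/2}^{T})^{T}$, with $V_1$ (resp.\ $V_2$) the set of vertices on which $\mathbf{x}_2$ is positive (resp.\ negative). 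Consequently the $(u,v)$-entry of $\mathbf{x}_2\mathbf{x}_2^{T}$ equals $\tfrac1n$ if $u,v$ lie in the same $V_i$ and $-\tfrac1n$ if they lie in different parts.

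The second step is the polynomial computation. Write the spectral decomposition $A=kP_1+\lambda_2P_2+\lambda_3P_3+\lambda_4P_4$, where $P_1=\tfrac1nJ_n$ and $P_2=\mathbf{x}_2\mathbf{x}_2^{T}$. Applying $f(x)=(x-\lambda_3)(x-\lambda_4)$ annihilates the $P_3$ and $P_4$ terms, so
\[
A^2-(\lambda_3+\lambda_4)A+\lambda_3\lambda_4 I_n=f(A)=\tfrac1n(k-\lambda_3)(k-\lambda_4)J_n+(\lambda_2-\lambda_3)(\lambda_2-\lambda_4)\,\mathbf{x}_2\mathbf{x}_2^{T},
\]
which rearranges to an explicit formula for $A^2$ in terms of $A$, $I_n$, $J_n$ and $\mathbf{x}_2\mathbf{x}_2^{T}$.

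For the final step, fix distinct vertices $u,v$ and take the $(u,v)$-entry of this identity: $(A^2)_{uv}=|N_G(u)\cap N_G(v)|$, $(I_n)_{uv}=0$, $(J_n)_{uv}=1$, $A_{uv}\in\{0,1\}$ records adjacency, and $(\mathbf{x}_2\mathbf{x}_2^{T})_{uv}=\pm\tfrac1n$ records whether $u,v$ lie in the same half. Running through the four combinations of adjacency and of same-half versus different-halves reproduces exactly the four expressions in the statement. I do not expect a genuine obstacle here: the computation is essentially a one-line reading of matrix entries, and the only points needing care are that the partition of Corollary~\ref{cor-1} is precisely the degree-$(\frac12(k+\lambda_2),\frac12(k-\lambda_2))$ halves-partition of Lemma~\ref{lem-2}, and that the hypothesis $2\le m\le n-4$ is simply what guarantees $G\in\mathcal{G}(4,2)$, so that the statement is non-vacuous.
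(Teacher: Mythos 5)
Your proposal is correct and follows essentially the same route as the paper: invoke Lemma \ref{lem-5}, Lemma \ref{lem-2} and Corollary \ref{cor-1} to obtain the halves-partition and the eigenvector $\mathbf{x}_2=\frac{1}{\sqrt n}(\mathbf{e}_{n/2}^{T},-\mathbf{e}_{n/2}^{T})^{T}$, then expand $(A-\lambda_3 I_n)(A-\lambda_4 I_n)$ via the spectral decomposition and read off the $(u,v)$-entries. Your explicit restriction to distinct $u,v$ (so that the $\lambda_3\lambda_4 I_n$ term contributes nothing) is a small point of care that the paper leaves implicit, but otherwise the two arguments coincide.
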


\begin{proof}
Since $\lambda_2$ ($\neq k$) is a simple eigenvalue of $G$,  by Lemmas \ref{lem-5}, \ref{lem-2} and Corollary \ref{cor-1} we know that $n$ is even, $\lambda_2$ is an integer and $\mathbf{x}_2=\frac{1}{\sqrt{n}}(\mathbf{e}_\frac{n}{2}^T,-\mathbf{e}_\frac{n}{2}^T)^T$ is an eigenvector of $\lambda_2$. Putting $V_1=\{v\in V\mid \mathbf{x}_{2}(v)=1\}$ and $V_2=\{v\in V\mid \mathbf{x}_{2}(v)=-1\}$, again by Corollary \ref{cor-1} we  see that $V(G)=V_1\cup V_2$ is a regular  partition  of $G$ into halves with degrees $(\frac{1}{2}(k+\lambda_2),\frac{1}{2}(k-\lambda_2))$. Furthermore, the matrix $(A-\lambda_3I_n)(A-\lambda_4I_n)$ has the spectral decomposition
\begin{equation*}
(A-\lambda_3I_n)(A-\lambda_4I_n)=\frac{1}{n}(k-\lambda_3)(k-\lambda_4)\mathbf{e}_n\mathbf{e}_n^T+(\lambda_2-\lambda_3)(\lambda_2-\lambda_4)\mathbf{x}_2\mathbf{x}_2^T,
\end{equation*}
that is,
\begin{equation}\label{equ-5}
A^2-(\lambda_3+\lambda_4)A+\lambda_3\lambda_4I_n=\frac{1}{n}(k-\lambda_3)(k-\lambda_4)J_n+\frac{1}{n}(\lambda_2-\lambda_3)(\lambda_2-\lambda_4)
\left(\begin{matrix}
J_\frac{n}{2}&-J_\frac{n}{2}\\
-J_\frac{n}{2}&J_\frac{n}{2}
\end{matrix}\right).
\end{equation}
Note that the $(u,v)$-entry of $A^2$ is equal to $|N_G(u) \cap N_G(v)|$. Then (\ref{equ-5}) implies that
\begin{equation*}
|N_G(u) \cap N_G(v)|\!=\!\left\{
\begin{array}{ll}
\lambda_3\!+\!\lambda_4\!+\!\frac{1}{n}[(k\!-\!\lambda_3)(k\!-\!\lambda_4)\!+\!(\lambda_2\!-\!\lambda_3)(\lambda_2\!-\!\lambda_4)]&\mbox{if  $u\sim v$ in $V_1$ or $V_2$};\\
\frac{1}{n}[(k\!-\!\lambda_3)(k\!-\!\lambda_4)\!+\!(\lambda_2\!-\!\lambda_3)(\lambda_2\!-\!\lambda_4)]&\mbox{if  $u\nsim v$ in $V_1$ or $V_2$};\\
\lambda_3\!+\!\lambda_4\!+\!\frac{1}{n}[(k\!-\!\lambda_3)(k\!-\!\lambda_4)\!-\!(\lambda_2\!-\!\lambda_3)(\lambda_2\!-\!\lambda_4)]&\mbox{if  $u\sim v$, $u\in V_1$, $v\in V_2$};\\
\frac{1}{n}[(k\!-\!\lambda_3)(k\!-\!\lambda_4)\!-\!(\lambda_2\!-\!\lambda_3)(\lambda_2\!-\!\lambda_4)]&\mbox{if  $u\nsim v$, $u\in V_1$, $v\in V_2$}.
\end{array}
\right.
\end{equation*}

This completes the proof.
\end{proof}

The \emph{Kronecker product} $A\otimes B$ of matrices $A=(a_{ij})_{m\times n}$ and $B=(b_{ij})_{p\times q}$ is the $mp\times nq$ matrix obtained from $A$ by replacing each element $a_{ij}$ with the block $a_{ij}B$. Given a graph $G$ on  $n$  vertices with adjacency matrix $A$, we denote by $G\circledast J_m$ the graph with adjacency matrix $A\circledast J_m=(A+I_n)\otimes J_m-I_{nm}$ (see \cite{Dam1}).  By the definition, $G\circledast J_m$ is just the graph obtained from $G$ by replacing every vertex of $G$ with a clique $K_m$, and two such cliques are joined if and only if their corresponding vertices are adjacent in $G$. It is easy to see that the spectra of $G$ and $G\circledast J_m$ are determined by each other, that is,
\begin{equation}\label{equ-6}
\begin{array}{rl}
&\mathrm{Spec}(G)=\left\{[\lambda_1]^{m_1},\ldots,[\lambda_t]^{m_t}\right\}\\
\Leftrightarrow &\mathrm{Spec}(G\circledast J_m)=\big\{[m\lambda_1+m-1]^{m_1},\ldots,[m\lambda_t+m-1]^{m_t},[-1]^{nm-n}\big\}.
\end{array}
\end{equation}

Recall that $\mathcal{G}(4,2,-1)$ denotes the set of graphs belonging to $\mathcal{G}(4,2)$ with $-1$ as an  eigenvalue.  The following result gives a partial characterization of the graphs in $\mathcal{G}(4,2,-1)$.
\begin{thm}\label{thm-main-1}
Let $G\in \mathcal{G}(4,2,-1)$. Then $-1$ is a non-simple eigenvalue of $G$ if and only if  $G=K_{s,s}\circledast J_{t}$ with $s,t\geq 2$, or $G=K_{s,s}^{-}\circledast J_{t}$ with $s\geq 3$ and $t\geq 1$.
\end{thm}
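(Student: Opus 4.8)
We argue the two implications separately: the ``if'' part is a direct spectral computation, while for the ``only if'' part we reduce $G$ to a blow-up of a much smaller graph. For sufficiency, start from $\mathrm{Spec}(K_{s,s})=\{[s]^1,[0]^{2s-2},[-s]^1\}$ and $\mathrm{Spec}(K_{s,s}^-)=\{[s-1]^1,[1]^{s-1},[-1]^{s-1},[-(s-1)]^1\}$; the correspondence (\ref{equ-6}) gives $\mathrm{Spec}(K_{s,s}\circledast J_t)=\{[st+t-1]^1,[t-1]^{2s-2},[-1]^{2s(t-1)},[-st+t-1]^1\}$ and $\mathrm{Spec}(K_{s,s}^-\circledast J_t)=\{[st-1]^1,[2t-1]^{s-1},[-1]^{2st-s-1},[-st+2t-1]^1\}$, the eigenvalue $-1$ of $K_{s,s}^-$ being absorbed into the new block of $-1$'s. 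For $s,t\ge 2$ (resp.\ $s\ge 3$, $t\ge 1$) a one-line check shows there are exactly four distinct eigenvalues, $-1$ has multiplicity $\ge 2$, and the only simple eigenvalues are the degree and the least one; since $\circledast J_t$ preserves connectedness and turns $k_0$-regularity into $(tk_0+t-1)$-regularity, both families lie in $\mathcal G(4,2,-1)$ with $-1$ non-simple.

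For the converse, let $G\in\mathcal G(4,2,-1)$ have $-1$ as a non-simple eigenvalue and write $\mathrm{Spec}(G)=\{[k]^1,[\theta_2]^{m_2},[\theta_3]^{m_3},[\theta_4]^{m_4}\}$ with $\theta_2>\theta_3>\theta_4$. First we locate $-1$. By Lemmas~\ref{lem-5} and~\ref{lem-2} the unique simple non-principal eigenvalue of $G$ is an integer, so by Lemma~\ref{lem-1} all four eigenvalues are integers (its cases (ii) and (iii) are incompatible with having two simple eigenvalues, one of which is the integer $-1$ of multiplicity $\ge 2$). A connected graph with least eigenvalue $-1$ is complete and hence has at most two eigenvalues, so $\theta_4\le -2$; a connected graph with $\theta_2\le 0$ is complete multipartite, which in the regular case has at most three eigenvalues, so $\theta_2\ge 1$. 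Hence $\theta_3=-1$, $m_3\ge 2$, and exactly one of $m_2,m_4$ equals $1$.

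Next comes the structural step, which I expect to be the main obstacle: showing $G=H\circledast J_t$ for some connected regular graph $H$ and some $t\ge 1$. Pass to the quotient $H$ of $G$ by the equivalence ``$N_G[u]=N_G[v]$''; each class is a clique that is a module, so $G$ is the generalized blow-up of $H$ in which vertex $i$ is replaced by a clique of size $t_i$, and one checks $H$ is closed-twin-free. With $S$ the characteristic matrix of this partition, $A+I_n=S(A_H+I_{|V(H)|})S^{\mathrm T}$, so $\mathrm{rank}(A+I_n)=\mathrm{rank}(A_H+I_{|V(H)|})\le|V(H)|$, i.e.\ $m_3\ge n-|V(H)|=\sum_i(t_i-1)$. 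The delicate part is to sharpen this to $t_1=\cdots=t_{|V(H)|}=:t$: the plan is to use walk-regularity (Lemma~\ref{lem-5}), reading the (constant) diagonal of $(A+I_n)^r=S\big((A_H+I_{|V(H)|})\mathrm{diag}(t_i)\big)^{r-1}(A_H+I_{|V(H)|})S^{\mathrm T}$ off a $|V(H)|\times|V(H)|$ matrix for all $r$ and combining this with $G$ having only four distinct eigenvalues; regularity of $G$ then forces $H$ regular. One thus obtains $G=H\circledast J_t$ with $H$ connected, regular and closed-twin-free.

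Finally we identify $H$. By (\ref{equ-6}) $H$ has at most four distinct eigenvalues, and comparison with $\mathrm{Spec}(G)$ leaves exactly two cases: (a) $H$ has three distinct eigenvalues, none equal to $-1$ (forcing $t\ge 2$); or (b) $H$ has four distinct eigenvalues, one equal to $-1$ (which also covers $t=1$). In case (a), $H$ is strongly regular, and since $G$ has exactly two simple eigenvalues one non-principal eigenvalue of $H$ must be simple; the standard multiplicity formulas for strongly regular graphs then force $H=K_{s,s}$ with $s\ge 2$, so $G=K_{s,s}\circledast J_t$ with $s,t\ge 2$. In case (b), $H\in\mathcal G(4,2,-1)$ with $-1$ again its second least eigenvalue and $H$ closed-twin-free; here I would argue that such an $H$ must be bipartite, so by Lemma~\ref{lem-4} it is the incidence graph of a symmetric BIBD, whose having $-1$ as an eigenvalue forces $\sqrt{k_H-\lambda}=1$; then $\lambda(v-1)=k_H(k_H-1)$ gives $v=k_H+1$, so $H$ is the incidence graph of the trivial symmetric $(k_H+1,k_H,k_H-1)$-design, that is $H=K_{s,s}^-$ with $s=k_H+1\ge 3$, whence $G=K_{s,s}^-\circledast J_t$ with $s\ge 3$, $t\ge 1$. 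Cases (a) and (b) together yield precisely the two asserted families. Apart from the structural step, the other delicate point is the bipartiteness claim in case (b) — excluding a non-bipartite, closed-twin-free member of $\mathcal G(4,2,-1)$ having $-1$ as an eigenvalue — which I would settle via the common-neighbour identities of Lemma~\ref{lem-3}, equivalently via the companion result that $\mathcal G(4,2,-1)$ contains no graph with $-1$ simple.
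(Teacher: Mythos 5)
Your ``if'' direction and the preliminary localization of $-1$ (integrality of all four eigenvalues, $-1$ being the non-simple second least eigenvalue) are fine. The ``only if'' direction, however, has genuine gaps at precisely the two places you yourself flag as delicate, and they are not minor technicalities but the heart of the theorem. First, the structural step: passing to the closed-twin quotient only tells you that the twin classes are cliques and modules; nothing in your argument shows these classes are nontrivial, and your plan to force equal class sizes $t_1=\cdots=t_{|V(H)|}$ from walk-regularity plus the four-eigenvalue condition is left as a sketch (the $r=2$ identity you write down only reproduces regularity, and the higher-$r$ identities are not analyzed). The paper obtains the blow-up structure by a concrete computation you never perform: since $-1$ is non-simple, the second simple eigenvalue $\alpha\neq-1$ produces, via Lemma \ref{lem-3}, a regular partition into halves $V_1\cup V_2$, and substituting the trace relations (\ref{equ-main-1}) into the common-neighbour formula (\ref{equ-main-2}) shows that adjacent vertices \emph{inside the same half} have exactly $k-1$ common neighbours, i.e.\ are closed twins. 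That one computation simultaneously yields that $G[V_1]$ and $G[V_2]$ are disjoint unions of cliques of the common size $\frac{k+\alpha+2}{2}$ (so the equal-size issue never arises, and the case of trivial classes is included as $\alpha=-k$) and that the quotient $H$ is bipartite, because the cliques sit inside the halves with no edges between cliques of the same half.

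Second, and more seriously, your case (b) rests on the unproven claim that a closed-twin-free graph $H\in\mathcal G(4,2,-1)$ with $-1$ non-simple must be bipartite. When $t=1$ this graph is $G$ itself, so the claim is essentially the statement being proved (its only instances are $K_{s,s}^-$), and asserting it is circular unless you give an independent argument; the tool you propose as a substitute, the companion non-existence result for $-1$ simple (Theorems \ref{thm-main-3} and \ref{thm-main-4}), concerns a different spectral situation and does not deliver bipartiteness of $H$. An induction on $n$ would handle $t\ge2$ (together with Theorem \ref{thm-main-0} when $-1$ is simple in $H$), but the closed-twin-free case $t=1$ remains exactly the hard case with no argument. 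In the paper bipartiteness of $H$ is not a claim to be proved about an abstract quotient: it falls out of the halved partition, after which the spectrum of $H$ computed from (\ref{equ-6}) pins $H$ down either as $K_{s,s}$ (three distinct eigenvalues) or, via Lemma \ref{lem-4} and the uniqueness of the symmetric $(s,s-1,s-2)$-design, as $K_{s,s}^-$. Until you supply (i) a proof that the twin classes are nontrivial of equal size, or bypass it as the paper does, and (ii) an independent identification of $H$ (equivalently of a closed-twin-free $G$) in case (b), the converse implication is not established.
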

\begin{proof}
By the assumption, let $G$ be a connected $k$-regular graph with $\mathrm{Spec}(G)=\big\{[k]^1,[\alpha]^1,$ $[\beta]^{n-2-m},[-1]^{m}\big\}$ ($2\leq m\leq n-4$). By considering the traces of $A(G)$ and $A^2(G)$, we get
\begin{equation}\label{equ-main-0}
\left\{\begin{aligned}
&k+\alpha+(n-2-m)\beta-m=0,\\
&k^2+\alpha^2+(n-2-m)\beta^2+m=kn.
\end{aligned}\right.
\end{equation}
Owing to  $\beta\neq -1$ and $k+\alpha+(n-2-m)\beta-m=0$, we have $n-k-\alpha-2\neq 0$. Then from (\ref{equ-main-0}) we can deduce that
\begin{equation}\label{equ-main-1}
\left\{\begin{aligned}
&\beta=\frac{kn-k^2-k-\alpha^2-\alpha}{n-k-\alpha-2},\\
&m=n-1+\frac{(n-k-1)(n-2\alpha-2)}{k^2 + (2 - n)k + \alpha^2 + 2\alpha - n + 2}.
\end{aligned}\right.
\end{equation}
Since $\alpha$ ($\neq k$) is a simple eigenvalue of $G$, by Lemma \ref{lem-3} we know that
$G$ admits a regular partition $V(G)=V_1\cup V_2$ into halves with degree $(\frac{1}{2}(k+\alpha),\frac{1}{2}(k-\alpha))$,  and that if $u,v\in V_i$ ($i=1,2$) are adjacent, then
\begin{equation}\label{equ-main-2}
|N_G(u) \cap N_G(v)|=\begin{aligned}
\beta-1+\frac{1}{n}[(k-\beta)(k+1)+(\alpha-\beta)(\alpha+1)].
\end{aligned}
\end{equation}
Combining (\ref{equ-main-1}) and (\ref{equ-main-2}),  we  have
$|N_G(u) \cap N_G(v)|=k-1$ by simple computation, so $u$ and $v$ have the same neighbors, that is, $N_G(u)\backslash \{v\}=N_G(v)\backslash \{u\}$. Since $u$ has exactly $\frac{k+\alpha}{2}$ neighbors in $G[V_i]$, each of them has the same neighbors (in $G$) as $u$, we see that such $\frac{k+\alpha}{2}$ neighbors together with $u$ induce a clique $K_{\frac{k+\alpha+2}{2}}$ which is totally included in $G[V_i]$ itself.  Furthermore, again by $N_G(u)\backslash \{v\}=N_G(v)\backslash \{u\}$,  there are no edges between any two such cliques in $V_i$, and if $v_1\in V_1$ is adjacent to $v_2\in V_2$, then the clique containing $v_1$ must be joined with the clique containing $v_2$.  Moreover, both $G[V_1]$ and $G[V_2]$ consist of the disjoint union of $\frac{n}{k+\alpha+2}$ copies of $K_{\frac{k+\alpha+2}{2}}$ because $|V_1|=|V_2|=\frac{n}{2}$.
Hence, there exists a regular bipartite graph $H$ on $\frac{2n}{k+\alpha+2}$ vertices  such that $G=H\circledast J_{\frac{k+\alpha+2}{2}}$. Then from (\ref{equ-6}) we  deduce that
\begin{equation*}
\mathrm{Spec}(H)=\Bigg\{\bigg[\frac{k-\alpha}{k+\alpha+2}\bigg]^1,
\bigg[-\frac{k-\alpha}{k+\alpha+2}\bigg]^1,
\bigg[\frac{2\beta-k-\alpha}{k+\alpha+2}\bigg]^{n-m-2},
[-1]^{m-\frac{k+\alpha}{k+\alpha+2}n}\Bigg\}.
\end{equation*}
Since $\frac{k-\alpha}{k+\alpha+2}$ is the  maximum eigenvalue of $H$ which is simple, $H$ must be a connected $\big(\frac{k-\alpha}{k+\alpha+2}\big)$-regular bipartite graph. Clearly,  $-\frac{k-\alpha}{k+\alpha+2}\neq -1$ since otherwise we have $\alpha=-1$, a contradiction.  Thus $-\frac{k-\alpha}{k+\alpha+2}<-1$ because  it is an integer, so $\alpha<-1$. We consider the following two situations.

\textbf{Case 1.} $m-\frac{k+\alpha}{k+\alpha+2}n=0$;

Since $H$ is  bipartite,  we get $\frac{2\beta-k-\alpha}{k+\alpha+2}=0$, and then $\beta=\frac{k+\alpha}{2}$. Putting $\beta=\frac{k+\alpha}{2}$ in (\ref{equ-main-1}) and  considering  $\alpha<k$,  we  deduce that $\alpha=k-n$, $\beta=k-\frac{n}{2}$, $m=\frac{2kn-n^2}{2k-n+2}$, and $\frac{n}{2}+1\leq k\leq\frac{3n}{4}-1$ because $\alpha\geq-k$ and $2\leq m\leq n-4$. Thus
\begin{equation*}
\mathrm{Spec}(H)=\Bigg\{\bigg[\frac{n}{2k-n+2}\bigg]^1,\bigg[-\frac{n}{2k-n+2}\bigg]^1,
[0]^{\frac{2n}{2k-n+2}-2}\Bigg\}.
\end{equation*}
Then $H$ is a connected $\big(\frac{n}{2k-n+2}\big)$-regular bipartite graph with three distinct eigenvalues. Therefore, $H=K_{\frac{n}{2k-n+2},\frac{n}{2k-n+2}}$ because  complete bipartite graphs are the only connected  bipartite graphs with  three distinct eigenvalues. Hence, $G=H\circledast J_{\frac{k+\alpha+2}{2}}= K_{\frac{n}{2k-n+2},\frac{n}{2k-n+2}}\circledast J_{\frac{2k-n+2}{2}}$. If we set $s=\frac{n}{2k-n+2}$ and $t=\frac{2k-n+2}{2}$, then $G=K_{s,s} \circledast J_{t}$ with $s,t\geq 2$ from the above arguments.

\textbf{Case 2.} $m-\frac{k+\alpha}{k+\alpha+2}n\geq 1$.

Since $H$ is bipartite, we have $\frac{2\beta-k-\alpha}{k+\alpha+2}=-(-1)=1$, and so $\beta=k+\alpha+1$. Combining  this with (\ref{equ-main-1}) we obtain that
$(\alpha+1)(n-2k-2)=0$, which implies that $k=\frac{n}{2}-1$ because $\alpha\neq -1$. Then $\beta=\frac{n}{2}+\alpha$, $m=n-1-\frac{2n}{n+2\alpha+2}$ by  (\ref{equ-main-1}), and so $-\frac{n}{2}+1\leq\alpha\leq-\frac{n}{6}-1$ because $2\leq m\leq n-4$. Furthermore, $n-m-2=m-\frac{k+\alpha}{k+\alpha+2}n=\frac{n-2\alpha-2}{n+2\alpha+2}$.  Thus we get
\begin{equation*}
\mathrm{Spec}(H)=\Bigg\{\bigg[\frac{n-2\alpha-2}{n+2\alpha+2}\bigg]^1,\bigg[-\frac{n-2\alpha-2}{n+2\alpha+2}\bigg]^1,[1]^{\frac{n-2\alpha-2}{n+2\alpha+2}},[-1]^{\frac{n-2\alpha-2}{n+2\alpha+2}}\Bigg\}.
\end{equation*}
Then $H$ is a connected $\big(\frac{n-2\alpha-2}{n+2\alpha+2}\big)$-regular bipartite graph with four distinct eigenvalues. By Lemma \ref{lem-4}, we may conclude that $H$ is the incidence graph of  a symmetric BIBD with parameters $\big(\frac{n-2\alpha-2}{n+2\alpha+2}+1,\frac{n-2\alpha-2}{n+2\alpha+2},\frac{n-2\alpha-2}{n+2\alpha+2}-1\big)$. It is well known that such a BIBD is unique (see \cite{Dam1}), and the corresponding incidence  graph can be obtained by removing a perfect matching from the  complete bipartite graph $K_{s,s}$ , where $s=\frac{n-2\alpha-2}{n+2\alpha+2}+1=\frac{2n}{n+2\alpha+2}$. Hence, $G=H\circledast J_{\frac{k+\alpha+2}{2}}= K_{\frac{2n}{n+2\alpha+2},\frac{2n}{n+2\alpha+2}}^{-}\circledast J_{\frac{n+2\alpha+2}{4}}$. If we put $s=\frac{2n}{n+2\alpha+2}$ and $t=\frac{n+2\alpha+2}{4}$, then $G=K_{s,s}^{-}\circledast J_t$ with $s\geq 3$ and $t\geq 1$ from the above arguments.

Conversely,  by simple computation we obtain that
\begin{equation}\label{equ-main-3}
\begin{aligned}
&\mathrm{Spec}(K_{s,s}\circledast J_t)=\{[st+t-1]^1,[-st+t-1]^1,[t-1]^{2s-2},[-1]^{2s(t-1)}\};\\
&\mathrm{Spec}(K_{s,s}^{-}\circledast J_t)=\{[st-1]^1,[-st+2t-1]^1,[2t-1]^{s-1},[-1]^{2st-s-1}\},
\end{aligned}
\end{equation}
and our result follows.
\end{proof}

By Theorem \ref{thm-main-1}, we obtain the following corollary immediately.
\begin{cor}\label{cor-2}
The graphs $K_{s,s}\circledast J_{t}$ ($s,t\geq 2$) and $K_{s,s}^{-}\circledast J_{t}$ ($s\geq 3$, $t\geq 1$) are DS.
\end{cor}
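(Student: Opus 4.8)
The plan is to deduce Corollary~\ref{cor-2} from Theorem~\ref{thm-main-1} together with the spectral formulas \eqref{equ-main-3}. The strategy is the standard one for proving that a graph is DS: suppose $H$ is any graph cospectral with $G$, where $G$ is one of $K_{s,s}\circledast J_t$ or $K_{s,s}^-\circledast J_t$; show first that $H$ must be regular, then that $H$ lands in the appropriate family, and finally that the spectrum pins down the parameters $s,t$ uniquely, forcing $H\cong G$.

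First I would record that both candidate graphs are connected (a clique-blow-up of a connected graph is connected) and regular, with largest eigenvalue $k$ equal to $st+t-1$ (resp. $st-1$) of multiplicity one, and with exactly four distinct eigenvalues, exactly two of which are simple; one checks directly from \eqref{equ-main-3} that $-1$ occurs with multiplicity $2s(t-1)\ge 1$ (resp. $2st-s-1\ge s\ge 3$), so in each case $G\in\mathcal G(4,2,-1)$ with $-1$ a non-simple eigenvalue. Next, if $\mathrm{Spec}(H)=\mathrm{Spec}(G)$, then $H$ has the same number of vertices $n=\sum m_i$ and the same number of edges $\tfrac12\sum m_i\lambda_i$; since $G$ is $k$-regular, $\sum\lambda_i^2\,m_i = kn$, and the standard fact that a graph is $k$-regular if and only if its largest eigenvalue equals $k=2|E|/n$ (equivalently $\sum\lambda_i^2 m_i = \lambda_1 n$ with $\lambda_1=2|E|/n$) shows $H$ is also $k$-regular. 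Being cospectral with a connected graph whose second-largest-in-absolute-value structure is inherited, $H$ is connected as well (connectedness is detected by simplicity of the largest eigenvalue for a graph that is already known to be regular, via Perron--Frobenius applied componentwise). Thus $H$ is a connected $k$-regular graph with four distinct eigenvalues, exactly two simple, and $-1$ a non-simple eigenvalue, i.e. $H\in\mathcal G(4,2,-1)$ with $-1$ non-simple.

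Now Theorem~\ref{thm-main-1} applies to $H$: it gives $H=K_{s',s'}\circledast J_{t'}$ with $s',t'\ge 2$, or $H=K_{s',s'}^-\circledast J_{t'}$ with $s'\ge 3$, $t'\ge 1$. It remains to rule out the cross case and to recover the parameters. For the cross case, compare the multiplicity pattern of the non-simple eigenvalue other than $-1$: in the first family it is $t'-1$ with multiplicity $2s'-2$, in the second it is $2t'-1$ with multiplicity $s'-1$; so if $G$ is of the first type and $H$ of the second (or vice versa) one gets, by matching the two simple eigenvalues $\{k,\pm(\text{something})\}$ and the pair (non-simple eigenvalue, its multiplicity), a small system in $s,t,s',t'$ that has no solution in the admissible ranges — this is the one genuinely computational point. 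Once $H$ is known to be of the same type as $G$, matching $k$ and the multiplicity of $-1$ (equivalently $k$ and $n$) yields two equations in $(s',t')$; e.g. for $K_{s,s}\circledast J_t$ one has $st+t-1=s't'+t'-1$ and $2st=2s't'$ — wait, more carefully $n=2st=2s't'$ and $k=(s+1)t-1=(s'+1)t'-1$, which together force $t=t'$ and $s=s'$. The analogous elimination works for $K_{s,s}^-\circledast J_t$ using $n=2st$ and $k=st-1$, hence $st=s't'$, and then the multiplicity $s-1$ of the eigenvalue $2t-1$ (or equivalently the value $2t-1$ itself matched against $2t'-1$) separates $s$ from $t$. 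In all cases $H\cong G$, proving $G$ is DS.

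The main obstacle I expect is the bookkeeping in excluding the cross case and in the final parameter elimination: one must be careful that the two simple eigenvalues could a priori be matched in either order (the negative simple eigenvalue of $K_{s,s}\circledast J_t$ is $-st+t-1=-(s-1)t-1$, that of $K_{s,s}^-\circledast J_t$ is $-st+2t-1=-(s-2)t-1$), so the argument should phrase everything in terms of the unordered spectrum — the set of (eigenvalue, multiplicity) pairs — and use the constraints $s,t\ge 2$ (resp.\ $s\ge 3$) to kill spurious solutions. None of this requires new ideas beyond Theorem~\ref{thm-main-1} and elementary arithmetic, so the corollary genuinely follows ``immediately,'' as claimed.
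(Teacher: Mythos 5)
Your proposal is correct and follows essentially the same route as the paper: a cospectral mate is connected and regular, hence lies in $\mathcal{G}(4,2,-1)$ with $-1$ non-simple, so Theorem~\ref{thm-main-1} applies, and arithmetic on the spectra in (\ref{equ-main-3}) separates the two families and recovers the parameters. The only difference is that the paper settles your ``genuinely computational point'' (the cross-family case) in one line---matching the vertex numbers gives $s_1t_1=s_2t_2$ and matching the degrees gives $s_1t_1+t_1-1=s_2t_2-1$, hence $t_1=0$, a contradiction---which is simpler than your proposed matching of the non-simple eigenvalue/multiplicity pattern, while the within-family parameter recovery that you spell out is left implicit in the paper.
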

\begin{proof}
Note that any graph cospectral with $K_{s,s}\circledast J_{t}$  or $K_{s,s}^{-}\circledast J_{t}$ must be connected. By Theorem \ref{thm-main-1}, it suffices to prove that $K_{s_1,s_1}\circledast J_{t_1}$ ($s_1,t_1\geq 2$) and $K_{s_2,s_2}^{-}\circledast J_{t_2}$ ($s_2\geq 3$, $t_2\geq 1$) cannot share the same spectrum. On the contrary, assume that  $\mathrm{Spec}(K_{s_1,s_1}\circledast J_{t_1})=\mathrm{Spec}(K_{s_2,s_2}^-\circledast J_{t_2})$. Then  $s_1t_1=s_2t_2$ and $s_1t_1+t_1-1=s_2t_2-1$ because they are regular graphs which have the same number of vertices and share the common degree. This implies that $t_1=0$, which is impossible because $t_1\geq 2$.
\end{proof}
\begin{remark}\label{rem-1}
\emph{It is worth mentioning that the DS-properties of $K_{s,s}\circledast J_{t}$ and $K_{s,s}^{-}\circledast J_{t}$ have been mentioned in  \cite{Dam6} and \cite{Dam1}, respectively.}
\end{remark}
Recall that $\mathcal{G}(4,\geq -1)$ denotes the set of connected regular graphs with four distinct eigenvalues and second least eigenvalue  not less than $-1$.  Petrovi\'{c} in \cite{Petrovic} determined all the connected graphs whose second least eigenvalue is not less than $-1$. However, it is a difficult work to pick out all the graphs belonging to $\mathcal{G}(4,\geq -1)$ from their characterization. Here, from Theorem \ref{thm-main-1} and Lemma \ref{lem-6}, we can easily give a complete characterization of the graphs in $\mathcal{G}(4,\geq -1)$.
\begin{thm}\label{thm-main-2}
A connected graph $G\in \mathcal{G}(4,\geq -1)$  if and only if $G=K_{s,s}\circledast J_{t}$ with $s,t\geq 2$, or $G=K_{s,s}^{-}\circledast J_{t}$ with $s\geq 3$ and $t\geq 1$.
\end{thm}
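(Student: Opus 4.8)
The plan is to deduce the theorem from Theorem \ref{thm-main-1} by showing that the hypothesis ``second least eigenvalue $\geq -1$'' forces $G$ into the situation already treated there, namely $G\in\mathcal{G}(4,2,-1)$ with $-1$ a non-simple eigenvalue. Write $\mathrm{Spec}(G)=\{[k]^1,[\lambda_2]^{m_2},[\lambda_3]^{m_3},[\lambda_4]^{m_4}\}$ with $k>\lambda_2>\lambda_3>\lambda_4$. The ``if'' direction is immediate: if $G=K_{s,s}\circledast J_t$ ($s,t\geq 2$) or $G=K_{s,s}^-\circledast J_t$ ($s\geq 3$, $t\geq 1$), then $G$ is connected and regular with four distinct eigenvalues, and a glance at (\ref{equ-main-3}) shows that its least eigenvalue is simple and $<-1$ while $-1$ occurs with positive multiplicity, so the second least eigenvalue of $G$ equals $-1$ and $G\in\mathcal{G}(4,\geq-1)$.

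For the converse, let $G\in\mathcal{G}(4,\geq-1)$. First $\lambda_4<-1$, since a connected graph with least eigenvalue at least $-1$ is a complete graph, which has only two distinct eigenvalues. Hence $\lambda_4$ must be \emph{simple}: otherwise the second least eigenvalue of $G$ (counted with multiplicity) would be $\lambda_4<-1$, contradicting $G\in\mathcal{G}(4,\geq-1)$. Consequently the second least eigenvalue of $G$ is $\lambda_3$, and so $\lambda_3\geq-1$.

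Next I claim $\lambda_3=-1$. If instead $\lambda_3>-1$, then the second least eigenvalue of $G$ exceeds $-1$, so by Lemma \ref{lem-6} the graph $G$ is one of $K_{m,n}$, $A(l,m,n)$, $B(l,m,n,p)$. But direct inspection of the graphs of Fig.~\ref{fig-1} shows that $A(l,m,n)$ and $B(l,m,n,p)$ are never regular (for instance, in $A(l,m,n)$ the vertices of the part of size $n$ have exactly one more neighbour --- the pendant vertex --- than the vertices of the part of size $l$), while a regular complete bipartite graph $K_{n,n}$ has at most three distinct eigenvalues; in either case this contradicts that $G$ is regular with four distinct eigenvalues. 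Therefore $\lambda_3=-1$, and in particular $-1$ is an eigenvalue of $G$.

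Finally, $\lambda_1=k$ and $\lambda_4$ are two simple eigenvalues of $G$, so Theorem \ref{thm-main-0} (which rules out three or more simple eigenvalues for such graphs) forces both $\lambda_2$ and $\lambda_3$ to be non-simple. Thus $G$ has exactly two simple eigenvalues, i.e.\ $G\in\mathcal{G}(4,2)$, and as $-1=\lambda_3$ is an eigenvalue we get $G\in\mathcal{G}(4,2,-1)$ with $-1$ non-simple; Theorem \ref{thm-main-1} then yields $G=K_{s,s}\circledast J_t$ with $s,t\geq 2$, or $G=K_{s,s}^-\circledast J_t$ with $s\geq 3$ and $t\geq 1$. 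The argument is short once Theorems \ref{thm-main-0} and \ref{thm-main-1} are available; the only step that genuinely requires checking is $\lambda_3=-1$, i.e.\ that none of the three families of Lemma \ref{lem-6} is a regular graph with four distinct eigenvalues, and this presents no real difficulty once those graphs are written out.
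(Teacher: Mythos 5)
Your proof is correct and follows essentially the same route as the paper: use Lemma \ref{lem-6} (noting $A(l,m,n)$ and $B(l,m,n,p)$ are never regular and $K_{n,n}$ has only three distinct eigenvalues) to force the second least eigenvalue to equal $-1$, observe the least eigenvalue is $<-1$ and simple, invoke Theorem \ref{thm-main-0} to rule out a third simple eigenvalue so that $G\in\mathcal{G}(4,2,-1)$ with $-1$ non-simple, and finish with Theorem \ref{thm-main-1} and the spectra in (\ref{equ-main-3}). The only difference is cosmetic ordering of the steps.
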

\begin{proof}
Suppose that $G\in \mathcal{G}(4,\geq -1)$, and  $\alpha$, $\beta$ are the least and second least eigenvalues of $G$, respectively.  If $\beta>-1$, then  $G=K_{m,n}$, $A(l,m,n)$ or $B(l,m,n,p)$ with proper parameters $l,m,n,p$ by Lemma \ref{lem-6}. It is seen that $A(l,m,n)$ and  $B(l,m,n,p)$ cannot be regular, so $G=K_{n,n}$ with $n\geq 2$. Note that $K_{n,n}$ has  only three distinct eigenvalues,  so there are no graphs in $\mathcal{G}(4,\geq -1)$ with  $\beta>-1$, and thus $\beta=-1$ because $G\in \mathcal{G}(4,\geq -1)$. We claim that $\alpha<\beta=-1$ since otherwise $G$ will be a complete graph due to the least eigenvalue of $G$ is  $-1$, and thus $\alpha$ is a simple eigenvalue of $G$. As a consequence, $G$ will belong to $\mathcal{G}(4,2,-1)$ and $-1$ is a non-simple eigenvalue of $G$  by Theorem \ref{thm-main-0}. Hence, $G=K_{s,s}\circledast J_{t}$ with $s,t\geq 2$, or $G=K_{s,s}^{-}\circledast J_{t}$ with $s\geq 3$ and $t\geq 1$ by Theorem \ref{thm-main-1}.

Conversely, from (\ref{equ-main-3}) we obtain the required result immediately.
\end{proof}

Now we continue to  consider the graphs in $\mathcal{G}(4,2,-1)$. The following result excludes the existence of such graphs belonging to (2b) (see Section \ref{s-1}).
\begin{thm}\label{thm-main-3}
There are no connected $k$-regular graphs with spectrum $\{[k]^1,[-1]^1,[\alpha]^{m},$ $[\beta]^{n-2-m}\}$, where $\alpha$ and $\beta$ are not integers and $2\leq m\leq n-4$.
\end{thm}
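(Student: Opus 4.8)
The plan is to distil a single Diophantine identity from a double-counting argument and then knock it out with a parity obstruction.

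First I record three preliminary observations. Since $\det(xI-A)$ is monic with integer coefficients and $\alpha\notin\mathbb{Z}$, the eigenvalue $\alpha$ is an algebraic integer of degree $2$ over $\mathbb{Q}$ whose algebraic conjugate is again an eigenvalue of $A$; as $\beta$ is the only other irrational eigenvalue, $\beta$ must be that conjugate, so $a:=\alpha+\beta\in\mathbb{Z}$ and $c:=\alpha\beta\in\mathbb{Z}$, and the nontrivial automorphism of $\mathbb{Q}(\alpha)/\mathbb{Q}$ carries $\mathcal{E}(\alpha)$ bijectively onto $\mathcal{E}(\beta)$, forcing $m=n-2-m$, i.e.\ $n=2m+2$. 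Second, since $-1$ is a simple eigenvalue different from $k$, Lemma \ref{lem-3} applied with $\lambda_2=-1$, $\lambda_3=\alpha$, $\lambda_4=\beta$ provides a regular partition $V(G)=V_1\cup V_2$ into halves with degrees $\big(\tfrac12(k-1),\tfrac12(k+1)\big)$; in particular these half-degrees are integers, so $k$ is odd, and since $k=1$ would make $G\cong K_2$ we have $k\ge 3$. Third, reading off a diagonal entry of \eqref{equ-5} gives $k+c=\tfrac1n\big[(k-\alpha)(k-\beta)+(1+\alpha)(1+\beta)\big]$, so for two distinct vertices $u,v$ lying in the same part of the partition, Lemma \ref{lem-3} collapses to $|N_G(u)\cap N_G(v)|=k+c$ if $u\nsim v$ and $|N_G(u)\cap N_G(v)|=a+k+c$ if $u\sim v$.

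The heart of the argument is to evaluate $S:=\sum_{v\in V_1\setminus\{u\}}|N_G(u)\cap N_G(v)|$ for a fixed $u\in V_1$ in two ways. On one hand, since $u$ has exactly $\tfrac12(k-1)$ neighbours inside $V_1$, the third observation yields $S=\big(\tfrac n2-1\big)(k+c)+\tfrac12(k-1)a=m(k+c)+\tfrac12(k-1)a$. On the other hand, counting the length-$2$ walks from $u$ to the vertices of $V_1\setminus\{u\}$ according to their middle vertex $w\in N_G(u)$, and using that $w$ has $\tfrac12(k-1)$ neighbours in $V_1$ if $w\in V_1$ and $\tfrac12(k+1)$ if $w\in V_2$, we obtain $S=\tfrac{k-1}{2}\cdot\tfrac{k-3}{2}+\tfrac{k+1}{2}\cdot\tfrac{k-1}{2}=\tfrac{(k-1)^2}{2}$. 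Equating the two expressions and eliminating $a$ by means of $\mathrm{tr}(A)=k-1+ma=0$ (so that $a=(1-k)/m$ and $(ma)^2=(k-1)^2$), one gets, after clearing denominators, the compact relation $c=\tfrac12 a^2(m+1)-k$.

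To finish, note that the discriminant of $x^2-ax+c$ is $(\alpha-\beta)^2=a^2-4c=4k-(2m+1)a^2$, which is strictly positive since $\alpha\ne\beta$. As $a\in\mathbb{Z}$ and $ma=1-k$ with $k\ge 3$, write $a=-e$ with $e=(k-1)/m\in\mathbb{Z}_{\ge 1}$; then $k=me+1$ and $4k-(2m+1)e^2=2me(2-e)+(4-e^2)$, which is $\le 0$ as soon as $e\ge 2$. Hence $e=1$, $a=-1$, $k=m+1$, and substituting back gives $c=\tfrac12(m+1)-(m+1)=-\tfrac k2$, which is not an integer because $k$ is odd — contradicting $c=\alpha\beta\in\mathbb{Z}$. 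So no such graph exists. The one genuinely delicate point is choosing the right quantity to double-count: the two trace identities by themselves determine $a$ but leave $c$ unconstrained, and it is the refinement of the path count by the partition into halves — equivalently, the identity $c=\tfrac12 a^2(m+1)-k$ — that reduces the whole problem to the discriminant estimate and the final parity check.
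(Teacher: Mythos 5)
Your proof is correct, and its skeleton is essentially that of the paper: both arguments rest on the regular partition into halves with degrees $(\tfrac12(k-1),\tfrac12(k+1))$ supplied by the simple eigenvalue $-1$ (hence $k$ odd), on the trace identities, on forcing $\alpha+\beta=-1$ and $n=2k$, and on a final parity contradiction --- your $\alpha\beta=-\tfrac{k}{2}\notin\mathbb{Z}$ is the paper's non-integral codegrees $\tfrac{k}{2}-1,\tfrac{k}{2}$ in different clothing. Two remarks on the packaging. First, the step you single out as the ``heart'' is in fact redundant: the double count of $S$ reproduces exactly $\mathrm{tr}(A^2)=kn$ once $\mathrm{tr}(A)=0$ and $n=2m+2$ are used, so the relation $c=\tfrac12a^2(m+1)-k$ already follows from the two trace identities; your claim that the traces ``leave $c$ unconstrained'' is not right, and the only input from the partition that is genuinely needed is the integrality of the half-degrees, i.e.\ that $k$ is odd. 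Second, the discriminant estimate used to force $e=1$ is valid but roundabout: since $a=\tfrac{2(1-k)}{n-2}$ and $1<k<n-1$ (as $k=n-1$ gives a complete graph), one has $-2<a<0$, so $a=-1$ at once --- this is how the paper gets $n=2k$, after solving the traces explicitly and matching against the $\tfrac12(a\pm\sqrt{b})$ form of Lemma \ref{lem-1}. Finally, your derivation of $a,c\in\mathbb{Z}$ and $m=n-2-m$ is fine (it is the content of Lemma \ref{lem-1}(ii), which you could simply cite), but the assertion that the field automorphism carries $\mathcal{E}(\alpha)$ onto $\mathcal{E}(\beta)$ should be replaced by the standard unique-factorization argument on the characteristic polynomial, since eigenvectors need not have entries in $\mathbb{Q}(\alpha)$.
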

\begin{proof}
On the conrary, assume that $G$ is such a graph.  Then $G$ will be a connected $k$-regular graphs having exactly two integral eigenvalues. By Lemma \ref{lem-1}, the  eigenvalues $\alpha$ and $\beta$ are of the form $\frac{1}{2}(a\pm\sqrt{b})$ ($a,b\in \mathbb{Z}$, $b>0$) and have the same multiplicity, i.e., $m=\frac{1}{2}(n-2)$. Then $\alpha$ and $\beta$ satisfy  the following two equations:
\begin{equation*}
\left\{\begin{aligned}
&k-1+\frac{1}{2}(n-2)\alpha+\frac{1}{2}(n-2)\beta=0,\\
&k^2+1+\frac{1}{2}(n-2)\alpha^2+\frac{1}{2}(n-2)\beta^2=kn.
\end{aligned}\right.
\end{equation*}
By simple computation, we obtain
\begin{equation*}
\left\{\begin{aligned}
&\alpha=\frac{-k+1+\sqrt{(kn-k-1)(n-k-1)}}{n-2},\\
&\beta=\frac{-k+1-\sqrt{(kn-k-1)(n-k-1)}}{n-2}.
\end{aligned}\right.
\end{equation*}
Considering that $\alpha$ and $\beta$ are of the form $\frac{1}{2}(a\pm\sqrt{b})$ ($a,b\in \mathbb{Z}$), we claim that  $\frac{-k+1}{n-2}=\frac{a}{2}$, and so  $a=-1$ because $a\in \mathbb{Z}$ and $1<k<n-1$. Thus $n=2k$, and then
\begin{equation}\label{equ-main-4}
\begin{aligned}
\alpha=\frac{1}{2}(-1+\sqrt{2k + 1}),~~~~\beta=\frac{1}{2}(-1-\sqrt{2k + 1}).
\end{aligned}
\end{equation}
Furthermore,  by Lemma \ref{lem-3}, the graph $G$ admits a  regular partition $V(G)=V_1\cup V_2$ into halves  with degree $(\frac{1}{2}(k-1),\frac{1}{2}(k+1))$ such that
\begin{equation*}
|N_G(u) \cap N_G(v)|=\left\{
\begin{aligned}
&\alpha+\beta+\frac{1}{n}[(k-\alpha)(k-\beta)+(-1-\alpha)(-1-\beta)]&\mbox{if  $u\sim v$ in $V_i$};\\
&\frac{1}{n}[(k-\alpha)(k-\beta)+(-1-\alpha)(-1-\beta)]&\mbox{if  $u\nsim v$ in $V_i$}.
\end{aligned}
\right.
\end{equation*}
Putting (\ref{equ-main-4}) and $n=2k$  in the above equation, we get
\begin{equation*}
|N_G(u) \cap N_G(v)|=\left\{
\begin{aligned}
&\frac{k}{2}-1&\mbox{if  $u\sim v$ in $V_i$};\\
&\frac{k}{2}&\mbox{if  $u\nsim v$ in $V_i$},
\end{aligned}
\right.
\end{equation*}
which is impossible because $k$ is  odd  due to $\frac{1}{2}(k-1)$ is an integer.

This completes the proof.
\end{proof}

By Theorems \ref{thm-main-1} and  \ref{thm-main-3}, in order to determine all the graphs in  $\mathcal{G}(4,2,-1)$, it remains to consider such graphs belonging to (2a) (see Section \ref{s-1}), i.e., the $k$-regular graphs with spectrum $\{[k]^1,[-1]^1,[\alpha]^{m},[\beta]^{n-2-m}\}$, where $\alpha$ and $\beta$ are  integers and  $2\leq m\leq n-4$.  By Appendix A in \cite{Dam2}, we find that there are no such graphs up to $30$ vertices. In what follows, we will prove the non-existence of such graphs for arbitrary $n$.

\begin{thm}\label{thm-main-4}
There are no connected $k$-regular graphs with spectrum $\{[k]^1,[-1]^1,[\alpha]^{m},$ $[\beta]^{n-2-m}\}$, where $\alpha$ and $\beta$ are integers and $2\leq m\leq n-4$.
\end{thm}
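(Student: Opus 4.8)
The plan is to suppose such a graph $G$ exists and derive a contradiction, after first clearing away the degenerate configurations. Normalise so that $\alpha>\beta$. Since $G$ is connected with four distinct eigenvalues it is not complete, so its least eigenvalue is $<-1$ (a connected graph whose least eigenvalue is $\ge-1$ is a complete graph); as that least eigenvalue is $\min(\beta,-1)$, we conclude $\beta<-1$, i.e.\ $\beta\le-2$. Next, if $\alpha\le0$ then $k$ is the only positive eigenvalue of $G$, hence $G$ is complete multipartite; but a connected \emph{regular} complete multipartite graph has at most three distinct eigenvalues, a contradiction. This leaves only the case $k>\alpha\ge1>-1>\beta$ with $\beta\le-2$, which is the heart of the matter.

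For this case I would apply Lemma~\ref{lem-3} with the simple eigenvalue $-1$ (take $\lambda_2=-1$ and $\{\lambda_3,\lambda_4\}=\{\alpha,\beta\}$): $G$ admits a regular partition $V(G)=V_1\cup V_2$ into halves with degrees $\bigl(\tfrac{k-1}{2},\tfrac{k+1}{2}\bigr)$, so $k$ is odd, and we obtain the four common-neighbour counts. Put $t:=k-1-\alpha-\beta\ (\ge0)$. Using the two identities $\mathrm{tr}\,A=0$ and $\mathrm{tr}\,A^{2}=kn$ to eliminate the multiplicity $m$, the numbers of common neighbours of a non-adjacent pair lying \emph{inside} a half, respectively \emph{across} the two halves, simplify to
\[
b=\frac{(k-1)t}{n-2},\qquad d=\frac{(k+1)t}{n}.
\]
Reading off the diagonal entry of $(A-\alpha I)(A-\beta I)$ in (\ref{equ-5}) — whose within-block value is the constant $b$ and whose diagonal entry is $k+\alpha\beta$ — gives $b=k+\alpha\beta$, so $\alpha|\beta|=k-b$ and, because $\alpha\ge1$ and $\beta\le-2$, $b\le k-2$. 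Also $t\ge1$ (if $t=0$ then $\{\alpha,\beta\}=\{k,-1\}$), so $b$ and $d$ are \emph{positive} integers.

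Eliminating $n$ from $n-2=(k-1)t/b$ and $n=(k+1)t/d$ shows that $(k-1)t+2b$ divides $2b(t-b)$. A brief computation using $\alpha+\beta=k-1-t$ and $\alpha|\beta|=k-b$ shows $t>b$: if $t<b$ then $\alpha\ge(k-b)+2$ whereas $\alpha\le(k-b)/2$, forcing $k-b\le-4$; and $t=b$ gives $n=k+1$, i.e.\ $G=K_{k+1}$, which has only two eigenvalues. Hence $2b(t-b)$ is a positive integer divisible by $(k-1)t+2b$. If $2b\le k-1$, then $(k-1)t+2b-2b(t-b)=t(k-1-2b)+2b(b+1)>0$, contradicting divisibility; so $2b\ge k+1$, and then $(k-1)t+2b\le2b(t-b)$, i.e.\ $t\ge\dfrac{2b(b+1)}{2b-k+1}$. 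On the other hand $\alpha\ge1$, via $\alpha=(k-1-t)+|\beta|$ and $|\beta|\le k-b$, gives $t\le2k-2-b$. Writing $b=\tfrac{k+1}{2}+j$ with $0\le j\le\tfrac{k-5}{2}$ (so $k\ge5$), the inequality $\dfrac{2b(b+1)}{2b-k+1}\le 2k-2-b$ reduces by elementary algebra to
\[
k^{2}-2(j+1)k+\bigl(8j^{2}+22j+13\bigr)\le0 ,
\]
whose discriminant in $k$ equals $-4\bigl(7j^{2}+20j+12\bigr)<0$; so the left-hand side is always positive — a contradiction. Therefore no graph with the stated spectrum exists.

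The step I expect to be the main obstacle is this last case: extracting the two clean formulas for $b$ and $d$ (which really does require both spectral-moment identities, in order to remove $m$), noticing $b=k+\alpha\beta$, verifying $t>b$, and then selecting precisely the right pair of estimates so that the divisibility relation $(k-1)t+2b\mid 2b(t-b)$ is forced to collide with $\alpha\ge1$. By contrast, the cases $\beta\ge-1$ and $\alpha\le0$ are routine. In carrying this out one must be careful with the small values of $k$, with the borderline sub-case $2b\le k-1$, and with the final quadratic estimate in $k$.
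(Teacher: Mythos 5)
Your proof is correct, and it closes the argument by a genuinely different route than the paper's. The opening reductions coincide: both get $\alpha\ge 1$ and $\beta\le -2$ (least eigenvalue $<-1$; one positive eigenvalue forces a regular complete multipartite graph, which has too few eigenvalues), and both invoke Lemma~\ref{lem-3} with $\lambda_2=-1$, giving the halves with degrees $(\tfrac{k-1}{2},\tfrac{k+1}{2})$ and hence $k$ odd. From there the paper uses the divisibility in Lemma~\ref{lem-2} to show $c:=-2(1+\alpha)(1+\beta)/n$ is a positive integer, bounds $c<4$, and kills the cases $c=1,2,3$ one by one, solving the trace relation explicitly for $\alpha,\beta$ and making the common-neighbour count of an \emph{adjacent} pair inside a half negative. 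You instead set $t=k-1-\alpha-\beta$ and work with the \emph{non-adjacent} counts $b$ and $d$; I checked your identities $b=k+\alpha\beta=(k-1)t/(n-2)$ and $d=(k+1)t/n$ against the trace relation $(n-2)\alpha\beta+(k-1)(\alpha+\beta)+kn-k^2-1=0$, and they are right, as is the reduction of integrality of $d$ to $(k-1)t+2b\mid 2b(t-b)$, the verification $t>b$, the dichotomy forced by parity to $2b\ge k+1$, the bound $t\le 2k-2-b$ from $\alpha\ge1$, and the final quadratic $k^2-2(j+1)k+8j^2+22j+13\le 0$ with negative discriminant. In your notation the paper's case parameter $c$ is exactly $d-b$, so the two proofs exploit closely related integrality facts, but yours is uniform (no case split, no explicit radical formulas for $\alpha,\beta$, no use of adjacent-pair counts), while the paper's is more mechanical and self-contained case checking. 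Two points you gloss over are fine but worth a line: integrality of $d$ as a common-neighbour count needs a non-adjacent pair across the halves, which exists since $k\le n-2$; and $2b\ge k+1$ (rather than $2b\ge k$) uses the parity of $k$. Also $t\ge2$ is immediate from $\alpha\le k-1$, $\beta\le-2$, so the parenthetical justification at $t=0$ is unnecessary.
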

\begin{proof}
On the contrary, assume that $G$ is such a graph with adjacency matrix $A$.  Firstly, we assert that $3\leq k\leq n-2$. In fact,  if $k=2$, then $G=C_n$, where $n$ is even by Lemmas \ref{lem-5} and \ref{lem-2}, and so  $G$ is a bipartite graph, which is impossible because $2\leq m\leq n-4$; if $k=n-1$, then $G$ is a complete graph, which has exactly two distinct eigenvalues, a contradiction.

Now suppose that $\alpha\beta\geq 0$, then $\alpha,\beta\geq 0$ or $\alpha,\beta\leq 0$. In the former case, we see that $G$ is a complete graph because $-1$ is the least eigenvalue of $G$, which is a contradiction. In the later case, we claim that $G$ is a  complete multipartite graph because $G$ has only one positive eigenvalue, thus $G=K_{n-k,n-k,\ldots,n-k}$ due to $G$ is $k$-regular. Then $\mathrm{Spec}(G)=\big\{[k]^1,[n-k-2]^{\frac{n}{n-k}-1},[0]^{n-\frac{n}{n-k}}\big\}$, a contradiction. Thus $\alpha\beta<0$. Without loss of generality, we may assume that $\alpha\geq 1$ and $\beta\leq -2$. By considering the traces of $A$ and $A^2$, we get
\begin{equation*}
\left\{\begin{aligned}
&k-1+m\alpha+(n-2-m)\beta=0,\\
&k^2+1+m\alpha^2+(n-2-m)\beta^2=kn.
\end{aligned}\right.
\end{equation*}
Canceling out $m$ in the above two equations, we have
\begin{equation}\label{equ-main-5}
\beta=-\frac{k(n-k)+(k-1)\alpha-1}{(n-2)\alpha+k-1}.
\end{equation}
By Lemmas \ref{lem-5} and \ref{lem-2}, it is seen that $n$ is a divisor of $(k-\alpha)(k-\beta)+(-1-\alpha)(-1-\beta)$ and $(k-\alpha)(k-\beta)-(-1-\alpha)(-1-\beta)$, and so  a divisor of $-2(-1-\alpha)(-1-\beta)=-2(1+\alpha)(1+\beta)>0$ because $\alpha\geq 1$ and $\beta\leq -2$ are integers. Thus  $c=\frac{-2(1+\alpha)(1+\beta)}{n}$ is a positive integer.  Combining this with (\ref{equ-main-5}), we get
\begin{eqnarray*}
c&=&\frac{2(k-\alpha)(\alpha+1)(n-k-1)}{n(n-2)\alpha+n(k-1)}
<\frac{2(k-\alpha)(\alpha+1)(n-k-1)}{n(n-2)\alpha}=\frac{2(n-k-1)}{n\cdot\frac{n-2}{k-\alpha}\cdot\frac{\alpha}{\alpha+1}}\\
&<&\frac{4(n-k-1)}{n}<4.
\end{eqnarray*}
It suffices to consider the following three situations.

\textbf{Case 1.} $c=1$;

In this case, we get $\frac{2(k-\alpha)(\alpha+1)(n-k-1)}{(n-2)\alpha+k-1}=n$, that is,
\begin{equation}\label{equ-main-6}
2(n-k-1)\alpha^2+(n^2-2kn+2k^2-2)\alpha+(2k-n)(k+1)=0,
\end{equation}
which implies that $n>2k$ because $n-k-1>0$, $n^2-2kn+2k^2-2>0$ and $\alpha\geq 1$. Solving  (\ref{equ-main-6}), we get
\begin{equation}\label{equ-main-7}
\alpha=\frac{-(n^2\!-\!2kn\!+\!2k^2\!-\!2)+\!\sqrt{(n^2-2kn+2k^2-2)^2+8(n-k-1)(n-2k)(k+1)}}{4(n-k-1)}.
\end{equation}
Again from (\ref{equ-main-5}) we obtain
\begin{equation}\label{equ-main-8}
\beta=\frac{-(n^2\!-\!2kn\!+\!2k^2\!-\!2)-\!\sqrt{(n^2-2kn+2k^2-2)^2+8(n-k-1)(n-2k)(k+1)}}{4(n-k-1)}.
\end{equation}
Then, by Lemma \ref{lem-3}, $G$ admits a  regular partition $V(G)=V_1\cup V_2$ into halves  with degrees $(\frac{1}{2}(k-1),\frac{1}{2}(k+1))$ such that
\begin{equation}\label{equ-main-9}
|N_G(u) \cap N_G(v)|=\alpha+\beta+\frac{1}{n}[(k-\alpha)(k-\beta)+(-1-\alpha)(-1-\beta)]\mbox{ for   $u\sim v$ in $V_i$}.
\end{equation}
Combining (\ref{equ-main-7}), (\ref{equ-main-8}) and (\ref{equ-main-9}), we thus have $|N_G(u)\cap N_G(v)|=k-\frac{n}{2}-1$. This implies that
$n\leq 2k-2$ because $|N_G(u)\cap N_G(v)|\geq 0$, contrary to $n>2k$.

\textbf{Case 2.} $c=2$;

In this case, we have $\frac{(k-\alpha)(\alpha+1)(n-k-1)}{(n-2)\alpha+k-1}=n$. This implies that
\begin{equation*}
\left\{\begin{aligned}
&\alpha=\frac{-(n^2\!-\!(k\!+\!1)n\!+\!k^2\!-\!1)+\!\sqrt{(n^2\!-\!(k\!+\!1)n\!+\!k^2\!-\!1)^2\!-\!4(n\!-\!k\!-\!1)(k^2\!+\!k\!-\!n)}}{2(n-k-1)},\\
&\beta=\frac{-(n^2\!-\!(k\!+\!1)n\!+\!k^2\!-\!1)-\!\sqrt{(n^2\!-\!(k\!+\!1)n\!+\!k^2\!-\!1)^2\!-\!4(n\!-\!k\!-\!1)(k^2\!+\!k\!-\!n)}}{2(n-k-1)}.
\end{aligned}\right.
\end{equation*}
Again from (\ref{equ-main-9}) we deduce that $|N_G(u)\cap N_G(v)|=k-n-1<0$ for $u\sim v$ in $V_i$ ($i=1,2$), which is a contradiction.

\textbf{Case 3.} $c=3$.

In this case, we have $\frac{2(k-\alpha)(\alpha+1)(n-k-1)}{(n-2)\alpha+k-1}=3n$, that is,
\begin{equation*}
2(n-k-1)\alpha^2+[3n^2-(2k+4)n+2k^2-2]\alpha+(k-3)n+2k^2+2k=0,
\end{equation*}
which is  impossible because $n-k-1>0$, $3n^2-(2k+4)n+2k^2-2>0$, $(k-3)n+2k^2+2k>0$ due to $k\geq 3$, and  $\alpha >0$.

We complete this proof.
\end{proof}

Combining Theorems \ref{thm-main-1}, \ref{thm-main-3} and \ref{thm-main-4}, we obtain the  main result of this paper.
\begin{thm}\label{thm-main-5}
A connected graph $G\in \mathcal{G}(4,2,-1)$ if and only if $G=K_{s,s}\circledast J_{t}$ with $s,t\geq 2$, or $G=K_{s,s}^{-}\circledast J_{t}$ with $s\geq 3$ and $t\geq 1$.
\end{thm}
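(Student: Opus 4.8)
The plan is to obtain Theorem~\ref{thm-main-5} by assembling Theorems~\ref{thm-main-1}, \ref{thm-main-3} and~\ref{thm-main-4}, which between them account for every way a graph of $\mathcal{G}(4,2,-1)$ can occur. Let $G$ be a connected $k$-regular graph on $n$ vertices belonging to $\mathcal{G}(4,2,-1)$. Since $G$ is connected and regular, $k$ is a simple eigenvalue, so exactly one of the three eigenvalues distinct from $k$ is simple; moreover $-1$ is among the four eigenvalues. I would first split into two cases according to whether $-1$ is the simple eigenvalue distinct from $k$, or one of the two non-simple eigenvalues.

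If $-1$ is non-simple, the spectrum is $\{[k]^1,[\alpha]^1,[\beta]^{n-2-m},[-1]^m\}$ with $2\le m\le n-4$, which is exactly the hypothesis of Theorem~\ref{thm-main-1}; that theorem immediately gives $G=K_{s,s}\circledast J_t$ with $s,t\ge2$, or $G=K_{s,s}^-\circledast J_t$ with $s\ge3$, $t\ge1$. If instead $-1$ is simple, the spectrum is $\{[k]^1,[-1]^1,[\alpha]^m,[\beta]^{n-2-m}\}$ with $2\le m\le n-4$, and I would appeal to Lemma~\ref{lem-1}: its case (iii) is incompatible with $G\in\mathcal{G}(4,2)$, since there the three eigenvalues distinct from $k$ share a common multiplicity and would thus be all simple or all non-simple. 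Hence we are in case (i), with $\alpha,\beta$ integers, or in case (ii), with $\alpha,\beta=\tfrac12(a\pm\sqrt b)$ of the common multiplicity $m=\tfrac12(n-2)$. The former is excluded by Theorem~\ref{thm-main-4} and the latter by Theorem~\ref{thm-main-3}, so the subcase ``$-1$ simple'' cannot arise. This settles the forward direction.

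For the converse I would simply read off the spectra recorded in~\eqref{equ-main-3}. When $s,t\ge2$, the graph $K_{s,s}\circledast J_t$ has the four eigenvalues $st+t-1$, $-st+t-1$, $t-1$, $-1$; a short check using $s\ge2$ shows these are pairwise distinct, with $st+t-1$ and $-st+t-1$ simple and the other two of multiplicity at least $2$. Likewise, for $s\ge3$, $t\ge1$ the graph $K_{s,s}^-\circledast J_t$ has pairwise distinct eigenvalues $st-1$, $-st+2t-1$, $2t-1$, $-1$, again with exactly the first two simple. In both families $-1$ is an eigenvalue, so each graph lies in $\mathcal{G}(4,2,-1)$, completing the proof.

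There is essentially no obstacle at this stage: the only points requiring a line of care are verifying that the four displayed eigenvalues of each family are pairwise distinct (this is where $s\ge2$, resp. $s\ge3$, is used) and noting that Lemma~\ref{lem-1}(iii) is incompatible with having exactly two simple eigenvalues. The substantive difficulty of the classification has already been absorbed into Theorem~\ref{thm-main-4}, whose divisibility argument, combined with the common-neighbour count of Lemma~\ref{lem-3}, is what actually kills the remaining integral case; once that is in hand, Theorem~\ref{thm-main-5} is pure bookkeeping.
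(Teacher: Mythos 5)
Your proposal is correct and follows essentially the same route as the paper: the paper proves Theorem~\ref{thm-main-5} precisely by combining Theorem~\ref{thm-main-1} (the case where $-1$ is non-simple) with Theorems~\ref{thm-main-3} and~\ref{thm-main-4} (which kill the two subcases, non-integral and integral, when $-1$ is the simple eigenvalue), the converse being read off from the spectra in~(\ref{equ-main-3}). Your extra remarks — ruling out Lemma~\ref{lem-1}(iii) and checking distinctness of the eigenvalues in the two families — only make explicit what the paper's classification (2a)/(2b) and computation~(\ref{equ-main-3}) already contain.
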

Recall that  $\mathcal{G}(4,2,0)$ denotes the set of  graphs belonging to $\mathcal{G}(4,2)$ with $0$ as an  eigenvalue. Since the spectrum of a regular graph could be deduced from its complement,  we can easily characterize all the graphs in $\mathcal{G}(4,2,0)$ by Theorems \ref{thm-main-1}, \ref{thm-main-3} and \ref{thm-main-4}.
\begin{thm}\label{thm-main-6}
A connected graph $G\in \mathcal{G}(4,2,0)$ if and only if $G=\overline{K_{s,s}^{-}\circledast J_{t}}$ with $s\geq 3$ and $t\geq 1$.
\end{thm}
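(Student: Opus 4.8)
The plan is to reduce Theorem~\ref{thm-main-6} to the already established Theorem~\ref{thm-main-5} by passing to complements. First I would recall that if $G$ is a connected $k$-regular graph on $n$ vertices with eigenvalues $k=\lambda_1,\lambda_2,\ldots,\lambda_n$, then its complement $\overline{G}$ is $(n-1-k)$-regular with eigenvalues $n-1-k$ and $-1-\lambda_i$ for $i\geq 2$; moreover $\overline{G}$ is connected unless $G$ is disconnected, and since $G$ here is connected with four distinct eigenvalues it is easy to check $\overline{G}$ is connected as well (a regular graph and its complement cannot both be disconnected when $n\geq 2$, and a disconnected regular graph would force $k$ to be a repeated eigenvalue, contradicting $m_1=1$ for the relevant spectrum). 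Thus the map $G\mapsto\overline{G}$ is an involution on connected regular graphs that sends four distinct eigenvalues to four distinct eigenvalues and preserves the multiplicity pattern. In particular it restricts to a bijection between the graphs in $\mathcal{G}(4,2)$ having $0$ as an eigenvalue and the graphs in $\mathcal{G}(4,2)$ having $-1-0=-1$ as an eigenvalue, i.e.\ a bijection $\mathcal{G}(4,2,0)\leftrightarrow\mathcal{G}(4,2,-1)$.

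The key steps, in order, are: (1) state and justify the complement spectral transformation and the fact that complements of the relevant graphs stay connected; (2) observe that $G\in\mathcal{G}(4,2,0)$ if and only if $\overline{G}\in\mathcal{G}(4,2,-1)$, using that $0$ is an eigenvalue of $G$ exactly when $-1$ is an eigenvalue of $\overline{G}$, and that the ``exactly two simple eigenvalues'' condition is complement-invariant; (3) invoke Theorem~\ref{thm-main-5} to conclude $\overline{G}=K_{s,s}\circledast J_t$ with $s,t\geq 2$ or $\overline{G}=K_{s,s}^{-}\circledast J_t$ with $s\geq 3$, $t\geq 1$, hence $G=\overline{K_{s,s}\circledast J_t}$ or $G=\overline{K_{s,s}^{-}\circledast J_t}$; (4) rule out the first family by checking that $\overline{K_{s,s}\circledast J_t}$ is \emph{disconnected} (the complement of $K_{s,s}\circledast J_t$ contains the complement of a complete bipartite blow-up, which splits into the two ``sides'' that are no longer joined, in fact $\overline{K_{s,s}\circledast J_t}$ is the disjoint union of two copies of $\overline{K_s\circledast J_t}=\overline{K_{st}}$-type pieces — more precisely it is disconnected), so it cannot lie in $\mathcal{G}(4,2,0)$ which consists of \emph{connected} graphs; (5) conclude $G=\overline{K_{s,s}^{-}\circledast J_t}$ with $s\geq 3$, $t\geq 1$, and conversely verify via \eqref{equ-main-3} (complementing the listed spectrum) that every such graph is indeed connected, regular, has four distinct eigenvalues with exactly two simple, and has $0$ as an eigenvalue.

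I expect the only genuinely delicate point to be step~(4): showing that $\overline{K_{s,s}\circledast J_t}$ does not qualify. One has to be careful because complementation does not in general preserve connectivity, and here precisely the ``bipartite double'' family becomes disconnected under complementation — $K_{s,s}\circledast J_t$ has an independent-set-free bipartition whose two halves are completely joined, so in the complement those two halves have no edges between them, giving (at least) two components. Since Theorem~\ref{thm-main-6} as stated restricts to \emph{connected} $G$, this family is correctly excluded; I would spell this out explicitly rather than leave it implicit. The converse direction (step~5) is a routine spectral computation: from \eqref{equ-main-3} the spectrum of $K_{s,s}^{-}\circledast J_t$ is $\{[st-1]^1,[-st+2t-1]^1,[2t-1]^{s-1},[-1]^{2st-s-1}\}$, so $\overline{K_{s,s}^{-}\circledast J_t}$ has spectrum $\{[2st-s-1-(st-1)]^1\}\cup\{[-1-(-st+2t-1)]^1,[-1-(2t-1)]^{s-1},[-1-(-1)]^{2st-s-1}\}=\{[st-s]^1,[st-2t]^1,[-2t]^{s-1},[0]^{2st-s-1}\}$, which has four distinct eigenvalues (distinctness following from $s\geq 3$, $t\geq 1$), exactly two of them simple, and $0$ among them, as required; connectivity of this complement follows since $K_{s,s}^{-}\circledast J_t$ is not a complete multipartite graph.
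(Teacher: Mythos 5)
Your overall route (pass to the complement and invoke Theorem \ref{thm-main-5}) is exactly the paper's, but your step (1) has a genuine gap at precisely the delicate point, and the paper's proof consists mostly of filling it. The claim that ``$\overline{G}$ is connected unless $G$ is disconnected'' is false: a connected regular graph can have a disconnected complement, and $K_{s,s}\circledast J_t$ is itself such an example inside this very family --- your own step (4) depends on this fact, so your step (1) is internally inconsistent with it. The parenthetical justification is circular: if $\overline{G}$ were disconnected, its degree $n-1-k$ would be a repeated eigenvalue of $\overline{G}$, and nothing given a priori forbids this, because $n-1-k$ could coincide with $-1-\lambda_i$ for a non-principal eigenvalue $\lambda_i$ of $G$; equivalently, you must show that $k-n$ is \emph{not} an eigenvalue of $G$. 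If it were, $\overline{G}$ would have only three distinct eigenvalues, it would not lie in $\mathcal{G}(4,2,-1)$, and Theorem \ref{thm-main-5} could not be applied, so the asserted bijection $\mathcal{G}(4,2,0)\leftrightarrow\mathcal{G}(4,2,-1)$ is exactly what needs proof. The paper does this by splitting into the cases where $0$ is non-simple or simple, noting that the remaining eigenvalues $\alpha,\beta$ must have opposite signs (else $G$ is complete or complete multipartite), and then showing that a disconnected $\overline{G}$ forces $\beta=k-n$ or $\alpha=k-n$, after which the trace identities for $A$ and $A^{2}$ yield $\alpha\in\{0,k-n\}$, or $n=2k$ (making $G$ bipartite, impossible with this spectrum) or $\alpha=\beta$ --- contradictions in every case. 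Some such computation must appear in your proof; as written, the reduction does not go through.

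Two smaller corrections to your step (5). The degree of $\overline{K_{s,s}^{-}\circledast J_{t}}$ is $n-1-k=2st-1-(st-1)=st$, not $st-s$ (your value does not even give trace zero); the correct complement spectrum is $\{[st]^1,[st-2t]^1,[-2t]^{s-1},[0]^{2st-s-1}\}$, as in the paper. Also, ``connected because $K_{s,s}^{-}\circledast J_{t}$ is not complete multipartite'' is not a valid criterion: a regular graph whose complement is disconnected is a join but need not be complete multipartite. The clean argument is spectral: since $st$ differs from $st-2t$, $-2t$ and $0$, the degree of the complement is a simple eigenvalue, so $\overline{K_{s,s}^{-}\circledast J_{t}}$ is connected. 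With these repairs, your steps (3)--(5) coincide with the paper's argument.
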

\begin{proof}
 Let $G\in\mathcal{G}(4,2,0)$ be a connected $k$-regular graph with adjacency matrix $A$, and let $\overline{G}$ be the complement of $G$.

If $0$ is a non-simple eigenvalue of $G$, suppose that $\mathrm{Spec}(G)=\big\{[k]^1,[\alpha]^1,[\beta]^{n-2-m},[0]^{m}\big\}$ ($2\leq m\leq n-4$). Then $\overline{G}$ is a $(n-k-1)$-regular graph with $\mathrm{Spec}(\overline{G})=\big\{[n-k-1]^1,[-1-\alpha]^1,[-1-\beta]^{n-2-m},[-1]^{m}\big\}$.
We claim that $\alpha>0$ and $\beta<0$ or $\alpha<0$ and $\beta>0$, since otherwise $G$ will be  a regular complete multipartite graph (which  has only three distinct eigenvalues) or does not exist. Assume that $\overline{G}$ is  disconnected.  If $\alpha>0$ and $\beta<0$,  we have $n-k-1=-1-\beta$, i.e., $\beta=k-n$. By considering the traces of $A$ and $A^2$, we obtain
\begin{equation*}
\left\{\begin{aligned}
&k+\alpha+(n-2-m)(k-n)=0,\\
&k^2+\alpha^2+(n-2-m)(k-n)^2=kn,\\
\end{aligned}\right.
\end{equation*}
and so $\alpha=0$ or $\alpha=k-n$, which are impossible due to $\alpha>0$. If $\alpha<0$ and $\beta>0$, similarly,  we have $\alpha=k-n$ because $\overline{G}$ is disconnected and regular, and so $n=2k$, or $n\neq 2k$ and $\beta=k-n$ by considering the traces of $A$ and $A^2$. In both cases, we can deduce a contradiction  because $G$ cannot be a bipartite graph and $\alpha\neq\beta$. Therefore, $\overline{G}$ must be connected, and thus $\overline{G}\in\mathcal{G}(4,2,-1)$ with $-1$ as a non-simple eigenvalue. By Theorem \ref{thm-main-1}, we may conclude that $\overline{G}=K_{s,s}\circledast J_{t}$ with $s,t\geq 2$, or $\overline{G}=K_{s,s}^{-}\circledast J_{t}$ with $s\geq 3$ and $t\geq 1$. Hence, $G=\overline{K_{s,s}^{-}\circledast J_{t}}$ with $s\geq 3$ and $t\geq 1$ because $G$ is connected.

If $0$ is a simple eigenvalue of $G$, we suppose that $\mathrm{Spec}(G)=\big\{[k]^1,[0]^1,[\alpha]^{m},[\beta]^{n-2-m}\big\}$. Then $\mathrm{Spec}(\overline{G})=\big\{[n-k-1]^1,[-1]^1,[-1-\alpha]^{m},[-1-\beta]^{n-2-m}\big\}$. As above, one can easily deduce that $\overline{G}$ is connected, and so $\overline{G}\in\mathcal{G}(4,2,-1)$ with $-1$ as a simple eigenvalue. Then, by Theorems \ref{thm-main-3} and \ref{thm-main-4}, $\overline{G}$ does not exist and so is $G$.

Consequently, if $G\in\mathcal{G}(4,2,0)$ then $G=\overline{K_{s,s}^{-}\circledast J_{t}}$ with $s\geq 3$ and $t\geq 1$. Obviously, $\overline{K_{s,s}^{-}\circledast J_{t}}\in \mathcal{G}(4,2,0)$ because $\mathrm{Spec}(\overline{K_{s,s}^{-}\circledast J_{t}})=\{[st]^1,[st-2t]^1,[-2t]^{s-1},[0]^{2st-s-1}\}$. Our result follows.
\end{proof}

\section*{Acknowledgements}
The authors are grateful to Professor Richard A. Brualdi  and the referees for their valuable comments and  helpful suggestions, which have considerably improved the presentation of this paper.

\end{document}